\def\mod{{\hbox{\rm mod}}}
\def\R{{\hbox{\bf R}}}
\def\be#1{ \begin{equation}\label{#1} }
\def\bas{\begin{align*}}
\def\eas{\end{align*}}
\def\bi{\begin{itemize}}
\def\ei{\end{itemize}}
\def\Z{{\hbox{\bf Z}}}
\newenvironment{proof}{\noindent {\bf Proof} }{\endprf\par}
\def \endprf{\hfill  {\vrule height6pt width6pt depth0pt}\medskip}
\def\emph#1{{\it #1}}
\def\textbf#1{{\bf #1}}
\def\ep{{\epsilon}}
\def\mod\quad {\hbox {\rm mod}}
\def \ep{\varepsilon}
\theoremstyle{plain}
  \newtheorem{theorem}[subsection]{Theorem}
  \newtheorem{question}[subsection]{Question}
  \newtheorem{proposition}[subsection]{Proposition}
  \newtheorem{fact}[subsection]{Fact}
  \newtheorem{lemma}[subsection]{Lemma}
  \newtheorem{example}[subsection]{Example}
\theoremstyle{remark}
  \newtheorem{remark}[subsection]{Remark}
\theoremstyle{definition}
  \newtheorem{definition}[subsection]{Definition}
\begin{document}

\title[Squares in sumsets]
{Squares in sumsets}

\author{}
\author{Hoi H. Nguyen}
\address{Department of Mathematics, Rutgers University, Piscataway, NJ 08854, USA}
\email{hoi@@math.rutgers.edu}
\thanks{}

\author{Van H. Vu}
\address{Department of Mathematics, Rutgers University, Piscataway, NJ 08854, USA}
\email{vanvu@@math.rutgers.edu}

\thanks{The authors are partially  supported by an NSF Career Grant.}

\begin{abstract} A finite set $A$ of integers  is square-sum-free if there is no subset of $A$ sums up to a square. In 1986, Erd\H os posed the problem of determining the largest cardinality of a square-sum-free subset of $\{1, \dots, n \}$. Answering this question, we show that this maximum cardinality is of order  $n^{1/3+o(1)}$.
\end{abstract}

\maketitle

\section{Introduction}

\noindent Let $A$ be a set of numbers. We denote by $S_A$ the collection of
finite partial sums of $A $,

$$
S_A := \left \{ \sum_{x\in B}x ; B\subset A, 0 < |B| < \infty \right \}.
$$

\noindent For a positive integer $l\le |A|$ we denote by $l^\ast A$ the
collection of partial sums of $l$ elements of $A$,

$$
l^\ast A := \left \{ \sum_{x\in B}x ; B\subset A, |B|=l \right \}.
$$

\noindent  Let $[x]$ denote the set of positive integers at most $x$. In 1986, Erd\H{o}s \cite{Erdos} raised the following question:

\begin{question} \label{question:1}
What is the maximal cardinality of a subset $A$ of $[n]$
such that $S_A$ contains no square?
\end{question}

\noindent  We denote by $SF(n)$ the maximal cardinality in
question. Erd\H{o}s observed that

\begin{equation} \label{eqn:lowerbound} SF(n)=\Omega(n^{1/3}). \end{equation}

\noindent To see this, consider the following example

\begin{example}\label{example:1} Let $p$ be a prime and $k$ be the largest integer such that $kp \le n$. We choose $p$ of order $n^{2/3}$ such that
$k=\Omega (n^{1/3})$ and $1+\dots +k <p$. Then the set
$A:=\{p,2p, \dots,  k p \}$ is square-sum-free.
\end{example}

\begin{remark}  \label{remark:example1} The fact that $p$ is a prime is not essential. The construction still works if we choose $p$ to be a square-free number, namely, a number of the form $p=p_1\dots p_l$ where $p_i$ are different primes. \end{remark}

\noindent Erd\H os  \cite{Erdos} conjectured that $SF(n)$ is close to the lower bound in \eqref{eqn:lowerbound}. Shortly after Erd\H os' paper,   Alon \cite{Alon} proved the first non-trivial upper bound

\begin{equation} \label{eqn:upper1} SF(n)=O(\frac{n}{\log n}). \end{equation}

\noindent Next,  Lipkin \cite{Lipkin} improved to

\begin{equation} \label{eqn:upper2} SF(n)=O(n^{3/4+o(1)}). \end{equation}

\noindent In
\cite{AlonFreiman}, Alon and Freiman improved the bound further to

\begin{equation} \label{eqn:upper3} SF(n)=O(n^{2/3+o(1)}). \end{equation}

\noindent The latest development was due to
S\'ark\"ozy \cite{Sarkozy}, who showed

\begin{equation} \label{eqn:upper4} SF(n) = O(\sqrt {n\log n} ). \end{equation}

\noindent In this paper, we obtain the asymptotically tight bound

 \begin{equation} \label{eqn:upper5}  SF(n)= O(n^{1/3+o(1)}). \end{equation}

\begin{theorem}\label{theorem:square:1}
There is a constant $C$ such that for all $n \ge 2$

\begin{equation} \label{eqn:upper6} SF(n) \le n^{1/3} (\log n)^C \end{equation}

\end{theorem}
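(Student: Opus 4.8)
Write $m:=|A|$ and suppose for contradiction that $A\subseteq[n]$ is square-sum-free with $m\ge n^{1/3}(\log n)^{C}$ for a suitably large absolute constant $C$. The extremal example (Example~\ref{example:1}) identifies the only enemy: a square-sum-free set of near-maximal size should be, up to a negligible part, a dilate $d\cdot A'$ with $A'\subseteq[n/d]$ and $d$ squarefree, and once $d\gtrsim n^{2/3}$ the trivial inequality $m\le|d\Z\cap[n]|=n/d$ already forces $m^{3}\ll n$. So the plan is a structural dichotomy, preceded by two cheap reductions. First, by induction on $n$ we may assume $\gcd(A)$ has no square factor: if $p^{2}\mid\gcd(A)$ then $A/p^{2}\subseteq[n/p^{2}]$ is square-sum-free and $S_{A/p^{2}}$ contains a perfect square if and only if $S_{A}$ does, so Theorem~\ref{theorem:square:1} for $n/p^{2}$ transfers. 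Second, dividing $A$ once by the largest squarefree modulus built from small primes that has $\gg m(\log n)^{-O(1)}$ multiples in $A$ disposes of all small-scale concentration simultaneously — this is done a single time, so the logarithmic losses do not compound — at the only cost of replacing the target ``$S_{A}$ contains a square'' by ``$S_{A'}$ contains an element $tu^{2}$'' for a fixed integer $t\ll n^{2/3}$, which the argument below handles with no extra difficulty. Now say $A$ is \emph{concentrated} if some squarefree $d$ with $2\le d\le D:=n^{2/3}(\log n)^{-C'}$ has at least $m(\log n)^{-C'}$ multiples in $A$, and \emph{spread} otherwise.

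In the concentrated case one divides $A$ by that $d$, lands in the shorter interval $[n/d]$ with a set of size still $\gg m(\log n)^{-C'}$, and closes the case by combining the divisibility count $|d\Z\cap[n]|=n/d$ (which wins outright when $d$ is close to $D$) with the inductive statement of Theorem~\ref{theorem:square:1} in $[n/d]$; this is routine bookkeeping once the thresholds are fixed, and I will not dwell on it. The entire substance lies in the spread case.

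\medskip

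\noindent\textbf{The spread case (the heart).} Here $A$ is not concentrated on the multiples of any squarefree $d\le D$, and I must show $S_{A}$ contains a square (meaning, after the reduction above, an element $tu^{2}$, which I suppress). The first and main step is a structural statement: \textbf{in the spread case $S_{A}$ contains a proper generalized arithmetic progression $Q$ of bounded rank with $|Q|\ge(\diam S_{A})^{1/2}(\log n)^{O(1)}$, and moreover $S_{A}$ contains every translate $Q+j$, $0\le j<q^{\ast}$, where $q^{\ast}$ is the product of the primes up to $(\log n)^{O(1)}$.} The last point is obtained by reserving a block $A_{0}\subseteq A$ of $(\log n)^{O(1)}$ elements coprime to $q^{\ast}$ — available precisely because $A$ is spread — whose subset sums realize every residue mod $q^{\ast}$, and using $S_{A}\supseteq Q+S_{A_{0}}$. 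The content is that spreadness, together with the freedom to split $A=A_{1}\sqcup\cdots\sqcup A_{k}$ and add the pieces via $S_{A}\supseteq S_{A_{1}}+\cdots+S_{A_{k}}$, forces this much additive structure into $S_{A}$; proving it in the present range is where the real work lies, because the classical long-progression-in-sumsets results (Szemer\'edi--Vu and their predecessors) need $|A|\gg\sqrt n$, whereas here $|A|$ may be as small as $n^{1/3}(\log n)^{C}$ — \textbf{bridging this gap is the step I expect to be the main obstacle}, and it is where the squarefree nature of the forbidden set, rather than mere density of $S_A$, must be exploited.

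\medskip

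\noindent\textbf{From the progression to a square, and the numerology.} Granting such a $Q$, one finishes with a lattice-point count: writing the good translate as $Q'=\{x+\sum_{i}j_{i}q_{i}\}$, the number of solutions of $x+\sum_{i}j_{i}q_{i}=u^{2}$ with the $j_{i}$ in range and $u\le\sqrt N$ (where $N:=\diam S_{A}$) is of the expected order $\gg|Q'|/\sqrt N=|Q|/\sqrt N\ge1$, provided the local conditions are met: at the small primes dividing $q^{\ast}$ one first picks the translate $j$ so that $x$ is a quadratic residue modulo each of them, and at the larger prime factors of the $q_{i}$ the solvability and equidistribution are supplied by Weil's bound for the quadratic character, using that $|Q|\ge\sqrt N(\log n)^{O(1)}$ forces the relevant sides of $Q$ to exceed the square root of those moduli (arranging that all scales are handled at once is the delicate part of this step, but is carried by the size bound on $|Q|$ and the translates). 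Finally, the numerics: by the Erd\H{o}s--Heilbronn lower bound for restricted sumsets (Dias da Silva--Hamidoune) one has $|S_{A}|\ge|\lfloor m/2\rfloor^{\ast}A|\gg m^{2}$, while $\diam S_{A}\le mn$ and, from the contradiction hypothesis, $n\le m^{3}(\log n)^{-3C}$; hence $(\diam S_{A})^{1/2}\le(mn)^{1/2}\le m^{2}(\log n)^{-3C/2}$, so a set $Q\subseteq S_{A}$ of the required size $(\diam S_{A})^{1/2}(\log n)^{O(1)}$ comfortably fits inside the available $\gg m^{2}$ elements once $C$ is chosen large, and its common difference in each direction is automatically at most $\diam S_{A}/|Q|\le(\diam S_{A})^{1/2}(\log n)^{-O(1)}$. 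Producing such a square in $S_{A}$ contradicts square-sum-freeness, and Theorem~\ref{theorem:square:1} follows.
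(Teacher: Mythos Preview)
Your overall shape --- iterate on common divisors, locate a large GAP inside $S_A$, then find a square (or $pz^2$) in that GAP --- matches the paper's. But you have misidentified where the difficulty lies, and the step you flag as the ``main obstacle'' is in fact already available, while the step you treat as routine is where the real work is.

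First, your claim that ``the classical long-progression-in-sumsets results (Szemer\'edi--Vu and their predecessors) need $|A|\gg\sqrt n$'' is not correct. The Szemer\'edi--Vu theorem used here (Lemma~\ref{lemma:SV}, from \cite{SzemVu1}) applies precisely at the threshold $|A|\ge Cn^{1/3}$: it guarantees that $S_A$ contains either an arithmetic progression of length $c|A|^2$ or a proper GAP of rank~$2$ and cardinality $c|A|^3$. So the structural input you say you do not know how to obtain is exactly what \cite{SzemVu1} supplies (with the crucial caveat that the rank may be~$2$, not~$1$). No ``squarefree nature of the forbidden set'' is needed for this step.

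Second, the genuine obstacle --- which you underestimate --- is that the GAP $Q=\{a_0+a_1x_1+a_2x_2\}$ produced this way is \emph{not homogeneous}. Writing $q=(a_1,a_2)$, whether $Q$ can contain a square depends on whether $a_0$ is a quadratic residue modulo~$q$, and $q$ is determined by the GAP construction, not chosen in advance. Your plan to pre-reserve a block $A_0$ coprime to a product $q^\ast$ of small primes only handles the small prime factors of~$q$; for the large prime factors you appeal to Weil, but Weil gives equidistribution of $z^2$ in long intervals, not the ability to move $a_0$ into the set of residues when $a_0$ is fixed. The paper's resolution is Lemma~\ref{lemma:main2}: after building $Q$ from a small subset $A'\subset A$, one tries to shift $a_0$ into a quadratic residue mod~$q$ by adding elements of $A''=A\setminus A'$. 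A careful analysis (Lemma~\ref{lemma:main3}) shows this succeeds \emph{unless} every element of $A''$ is divisible by a common $d>1$ --- and in that case one replaces $A$ by $A''/d\subset[n/(pd)]$ and iterates. This is why the paper proves the more general Theorem~\ref{theorem:square:p} (target $pz^2$ rather than $z^2$), and why a single pre-processing step for ``small-scale concentration'' cannot replace the iteration: the divisor $d$ that obstructs you is only revealed \emph{after} $Q$ is built.

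Relatedly, your ``concentrated case'' is not closed by induction on Theorem~\ref{theorem:square:1}: dividing by a squarefree $d$ and finding a square $w^2$ in $S_{A/d}$ yields $dw^2\in S_A$, which is not a square. You need to induct on the $pz^2$ statement (Theorem~\ref{theorem:square:p}), which is exactly what the paper sets up.
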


In fact, we are going to prove the following (seemingly) more general theorem

\begin{theorem}\label{theorem:square:p}
There is a constant $C$ such that the following holds for all sufficiently large $n$.  Let  $p$ be positive
integer less than $n^{2/3}(\log n)^{-C}$ and  $A$ be a subset of
cardinality $n^{1/3}(\log n)^C$ of $[n/p]$. Then there exists an integer
$z$ such that $pz^2 \in S_A.$
\end{theorem}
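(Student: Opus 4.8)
The plan is to reduce the problem to finding a square in a long arithmetic progression that sits inside the partial sum set $S_A$, and to produce such a progression via a Freiman/Sárközy-type structure theorem for sumsets. First I would pass from the additive set $A\subset[n/p]$ to the sumset $l^\ast A$ for a suitable $l\asymp |A|^{1-o(1)}$; a standard observation (going back to the arguments behind \eqref{eqn:upper3} and \eqref{eqn:upper4}) is that a set $A$ of size $m=n^{1/3}(\log n)^C$ in $[n/p]$ has a large subset $A'$ whose $l$-fold restricted sumset $l^\ast A'$ contains a full arithmetic progression $P$ of length $L \ge (n/p)^{1-o(1)}$ and common difference $d$ with $d \le (\text{size of }A')^{o(1)}$. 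The key point is that because $\sum_{x\in A}x \le m\cdot n/p$ and each $l^\ast A$ lands in an interval of length about $l\cdot n/p$, while the number of representable sums is large, a Plünnecke–Ruzsa plus Freiman $3k-4$ type analysis forces a long progression; we are free to shrink $A$ by polylog factors since our hypotheses give polylog slack.

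Next, having a progression $P = \{a + d, a+2d, \dots, a+Ld\}\subset S_A$ with $L \ge (n/p)^{1-o(1)}$ and small $d$, I would look for an integer $z$ with $pz^2 \in P$, i.e.\ $pz^2 \equiv a \pmod d$ and $pz^2 \in [a+d, a+Ld]$. The congruence is the arithmetic heart: I need $pz^2 \equiv a \pmod d$ to be solvable, and here the flexibility in choosing which subset-sum we use, or in choosing $P$ (we may translate $P$ by further elements of $A$, or choose among several residue classes the sumset covers), should let me arrange that $a$ lies in the image of $z\mapsto pz^2$ mod $d$. Since $d$ is at most $(\log n)^{O(1)}$-smooth and small, the set of squares mod $d$, scaled by $p$, is non-negligible, and once the residue condition is met, consecutive valid $z$ differ by $O(d)$, so the valid $z$'s with $pz^2$ in the range $[a+d, a+Ld]$ of length $Ld \ge (n/p)^{1-o(1)}\cdot d$ exist as soon as $Ld \gg p\cdot(\sqrt{n/p})^{?}$ — one checks $pz^2$ ranges over an interval of length $\asymp L d$ when $z \asymp \sqrt{(a+Ld)/p}$, and the gap between consecutive admissible squares near that point is $O(d\sqrt{(a+Ld)/p}/\,?)$; the inequality $L \ge (n/p)^{1-o(1)}$ is exactly what makes this gap smaller than the window. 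This is where the numerology $p < n^{2/3}(\log n)^{-C}$ enters: it guarantees $n/p$ is large enough (at least $n^{1/3}(\log n)^{C}$) that the progression is long enough to catch a multiple-of-$p$ square.

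The main obstacle I expect is the first step: extracting a genuinely long arithmetic progression inside $l^\ast A$ with a \emph{small} common difference, uniformly over all $A$ of the given size. A set of size $m$ in $[n/p]$ need not have small doubling, so one cannot invoke Freiman's theorem directly; instead the argument must either (i) split into the small-doubling case (where Freiman gives a generalized progression, which after taking an $l$-fold sum becomes genuinely one-dimensional and long) and the large-doubling case (where iterated sumsets grow and quickly cover long progressions, as in the Sárközy/Szemerédi circle of ideas), or (ii) use a direct pigeonhole/Dirichlet argument on partial sums modulo small moduli to locate a dense residue class and then fill in an interval. Controlling the common difference $d$ — keeping it polylogarithmic so that the square-residue step goes through — is the delicate part, and I would expect the bulk of the technical work, and the source of the constant $C$, to be concentrated there. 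The second and third steps are then comparatively routine: a congruence solvability check and an interval-length count, both of which the hypothesis $p < n^{2/3}(\log n)^{-C}$ is tailored to make succeed.
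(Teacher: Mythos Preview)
Your proposal has two genuine gaps, both in the structural step.

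First, the central claim---that one can extract from $l^\ast A'$ a rank-one arithmetic progression of length $(n/p)^{1-o(1)}$ with \emph{polylogarithmic} common difference---is not attainable in general. If $A$ is itself a proper rank-$2$ GAP (take steps $q_1,q_2$ to be distinct primes of comparable size, cf.\ Example~\ref{example:2}), then $S_A$ sits inside a proper rank-$2$ GAP, and every rank-$1$ AP it contains has length at most the longer side, far short of $(n/p)^{1-o(1)}$. Your suggestion that in the small-doubling case ``after taking an $l$-fold sum [the generalized progression] becomes genuinely one-dimensional'' is incorrect: summing a rank-$2$ GAP with itself yields another rank-$2$ GAP. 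The paper's structural input (Lemma~\ref{lemma:SV}, refined to Lemma~\ref{lemma:main1}) accordingly guarantees only a rank-$1$ \emph{or} rank-$2$ GAP, and the rank-$2$ case must be carried through the number theory separately (Section~\ref{section:dim2}, via Poisson summation and a Weyl-type estimate, Lemma~\ref{lemma:Weyl}). Even in the rank-$1$ case the step $q$ is not polylogarithmic but can be as large as $n^{2/3}(\log n)^{C/12}/p$; solving $pz^2\equiv r\pmod q$ for such $q$ is itself nontrivial and is handled by Lemma~\ref{lemma:main3}.

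Second, your treatment of the congruence condition (``the flexibility in choosing which subset-sum we use \ldots\ should let me arrange that $a$ lies in the image of $z\mapsto pz^2$ mod $d$'') glosses over a real local obstruction: if every element of the leftover set is divisible by some $d'>1$, no amount of translating will change the residue of $a$ modulo $d'$. The paper's actual mechanism is the dichotomy of Lemma~\ref{lemma:main2}: either one \emph{can} shift the constant term into the form $pz^2\pmod q$ using elements of $A''=A\setminus A'$, or else all of $A''$ is divisible by a common $d>1$. In the latter case one writes $A''=\{db:b\in B\}$ with $B\subset[n/(pd)]$, replaces $p$ by $pd$, and iterates; this is precisely why the theorem is stated for arbitrary $p<n^{2/3}(\log n)^{-C}$ rather than just $p=1$, and the iteration (at most $O(\log n)$ rounds, each losing only a $(\log n)^{-c}$ fraction of $A$) is what eventually forces one of the first two alternatives. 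This iteration-and-dichotomy structure is the missing idea in your plan.
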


\noindent Theorem \ref{theorem:square:1} is the special case when $p=1$. Furthermore, Theorem \ref{theorem:square:1} implies many special cases of  Theorem \ref{theorem:square:p}. To see this, choose $A$ to have the form $A := \{p b \,\,| b \in B \}$ where $B$ is a subset of $[n/p]$ and $p$ is a square-free-number. Then finding a square in $S_A$ is  the same as finding a number of the form $pz^2$ in $S_B$.

If one replaces squares by higher powers, then the problem becomes easier and asymptotic bounds have been obtained earlier (see next section).

{\it Notations.}   We  use Landau asymptotic  notation such as $O, \Omega, \Theta, o$  throughout the paper, under the assumption that $n \rightarrow \infty$. Notation such as
$\Theta_c(.)$ means that the hidden constant in $\Theta$ depends on a (previously defined) quantity $c$. We will also omit all  unnecessary floors and ceilings. All logarithms have natural base. As usual, $e(x)$ means
$\exp(2 \pi i x) =\cos 2\pi x  + i \sin 2 \pi x $.

\section{The main ideas}\label{section:mainideas}

\noindent The general strategy for attacking  Question \ref{question:1} is as follows.  One first tries to show that if $|A|$ is sufficiently large, then $S_A$ should contain a large additive structure. Next, one would argue that a large additive structure should contain a square.

In previous works \cite{Alon,AlonFreiman,Lipkin,Sarkozy}, the additive structure was a (homogeneous) arithmetic progression.  (An arithmetic progression is homogeneous if it is of the form $\{ld, (l+1)d, \dots, (l+k)d \}$.) It is easy to show that if $P$ is a homogeneous AP of length $C_0 m^{2/3}$ in $[m]$, for some large constant $C_0$, then $P$  contains a square. Notice that the set $S_A$ is a subset of $[m]$ where $m:= |A| n$. Thus, if one can show that $S_A$ contains a homogeneous AP of length $C_0m^{2/3}$, then we are done. S\'ark\"ozy could prove that this is indeed the case, given
$|A| \ge C_1 \sqrt {n \log n}$ for a properly chosen constant $C_1$. This also solves (asymptotically) the problem when squares are replaced by higher powers, since in these cases, the lower bound (which can be obtained by modifying Example \ref{example:1}) is $\Omega (\sqrt n)$.

Unfortunately, $\sqrt n$ is the limit of this argument, since
there are examples of a subset $A$
of $[n]$ of size $\Omega (\sqrt n)$ where the longest AP in $S_A$ is much shorter than $(|A|n)^{2/3}$. In order to present such an example, we will need the following definition (which will play a crucial role in the rest of the paper)

\begin{definition}[Generalized arithmetic
progression-GAP]\label{definition:1}
A generalized arithmetic progression of rank $r$ is a set of the
form

$$Q= \{a_{0} + x_{1} a_{1 } + \dots + x_{r}a_{r} | 0 \le x_{i} \le L_{i}\}. $$

\noindent If all the sums $x_{1}a_{1}+ \dots+ x_{d}a_{d}$ are distinct, we say
that $Q$ is \emph{proper}. If $a_0=0$, we say that $Q$ is
\emph{homogeneous}. (Homogeneous arithmetic progression thus corresponds to the case $r=1$.)  We call $L_1,\dots,L_r$ the sizes of $Q$ and
$a_1,\dots,a_r$ its steps.
\end{definition}

\begin{example}\label{example:2} Consider

$$A:=\{q_1x_1+q_2x_2| 1 \le x_i \le N \} $$

where $q_1 \approx q_2 \approx n^{3/4}$ are different primes and $N = \frac{1}{100} n^{1/4}$. It is easy to show that $A$ is a proper GAP of rank $2$
and  $S_A$ is contained in the  proper GAP

$$ \{q_1x_1+q_2x_2| 1 \le x_i \le 1+\dots +N \}. $$

Thus, the longest AP in $S_A$ has length at most $1+\dots +N =\Theta (n^{1/2})$, while $A$ has cardinality $\Theta (n^{1/2})$.
\end{example}

The key fact that enables us to go below $\sqrt n$ and reach the optimal bound $n^{1/3}$ is a recent theorem of Szemer\'edi and Vu \cite{SzemVu1} that showed that if $|A| \ge C n^{1/3} $ for some sufficiently large constant $C$, then $S_A$ does contain a large proper GAP of rank at most 2.

\begin{lemma} \label{lemma:SV}  \cite{SzemVu1} There are positive  constants $C$ and $c$ such that the following holds. If $A$ is a subset of $[n]$ of cardinality at least
$Cn^{1/3}$, then $S_A$ contains either an AP $Q$ of length $c |A|^2$ or a proper GAP $Q$ of rank 2 and cardinality at least $c |A|^3 $.
\end{lemma}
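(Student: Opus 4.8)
\noindent The plan is to build the required AP or proper rank-$2$ GAP directly \emph{inside} $S_A$, absorbing one element of $A$ at a time so that only a bounded-rank object is ever carried along. Write $s=|A|$. The mechanism is the elementary \emph{doubling move}: if $Q\subseteq S_A$ is realized using a set $U\subseteq A$ (every point of $Q$ a sum of a subset of $U$) and $a\in A\setminus U$, then $Q$ and $Q+a$ both lie in $S_A$ (append $a$), so $Q\cup(Q+a)\subseteq S_A$, realized using $U\cup\{a\}$. I would start from $Q=\{a_0\}$, $U=\{a_0\}$ for some $a_0\in A$, iterate this greedily, and keep at each stage a sub-AP or sub-GAP $P\subseteq Q$ of controlled parameters, choosing the next element to absorb from the unused part $A\setminus U$ so as to enlarge $P$.

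\noindent \textbf{The greedy construction.} Suppose $P$ is an AP of length $\ell$ and step $d$. If some unused $a$ has $d\mid a$ and $a\le\ell d$, then $P\cup(P+a)$ contains an AP of step $d$ and length $>\ell$; absorb $a$ and replace $P$ by it. If instead some unused $a\le\ell d$ has $d\nmid a$, then $P+\{0,a\}$ is a proper rank-$2$ GAP (with one side of length $1$ for now); absorb $a$ and continue by growing the new direction the same way. Since $A\subseteq[n]$, every partial sum is at most $sn$, so $\ell d=O(sn)$ throughout; hence once $\ell$ is moderately large, $d$ is forced to be small, and as soon as $\ell d\ge n$ \emph{every} remaining element of $A$ lies below the reach $\ell d$ and is eligible. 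The crucial quantitative claim is that in this regime each absorbed element multiplies $\ell$ by a constant factor — not merely increments it — so after $\Theta(s)$ absorptions $\ell\ge cs^2$, which gives the first alternative. The only escape is that the process stalls while $\ell d$ is still much shorter than $s^2$, meaning every unused element sits above the current reach.

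\noindent \textbf{Stalled configurations.} A stall is strong structural information about $A$, and I would split into two sub-cases. If the AP cannot be lengthened but a genuinely new direction is still available, the accumulated object is a proper rank-$2$ GAP $Q\subseteq S_A$, and one must show its two side lengths multiply to at least $cs^3$; this is exactly where $s\ge Cn^{1/3}$ is spent, and the threshold is of the right order — Example \ref{example:1} exhibits, at $s\asymp n^{1/3}$, a set $A$ whose $S_A$ is no more than an AP of length $\Theta(s^2)$, leaving no slack. If instead the stall forces $A$ (up to a constant proportion) into a dilate $d\cdot\Z$ or into a short window of $[n]$, then passing to $\{a/d:a\in A,\, d\mid a\}$, respectively to the trace of $A$ on that window, yields a set of strictly larger density in a shorter ambient interval, and I would recurse; since the density must jump by a definite factor each time, the recursion terminates after $O(1)$ rounds.

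\noindent \textbf{The main obstacle.} The doubling mechanics are routine; the real difficulty is the bookkeeping that forces length, number of elements consumed, and rank to cooperate. Concretely, the hard points will be: (a) proving the geometric (not merely linear) growth of $\ell$, so that $O(s)$ elements reach length $cs^2$; (b) enumerating every way the greedy process can stall and showing each one already delivers a length-$cs^2$ AP, delivers a size-$cs^3$ proper rank-$2$ GAP, or reduces to a strictly denser instance; and (c) keeping the rank pinned at $2$ throughout — ruling out being driven to rank $\ge 3$ — for which one needs a sufficiently quantitative Freiman-type inverse statement \emph{with rank $2$}, not merely bounded rank, precisely at the scale $s\asymp n^{1/3}$.
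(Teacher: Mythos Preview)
This lemma is quoted from \cite{SzemVu1} and is not proved in the present paper; the relevant machinery is, however, reproduced in Section~\ref{section:lemma1}, where the closely related Lemma~\ref{lemma:main1} is established following \cite{SzemVu1}. Against that benchmark, your plan has a genuine gap.

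The crux is your point (a). You assert that once $\ell d\ge n$ each absorption multiplies $\ell$ by a constant factor, but absorbing $a$ with $d\mid a$ only gives new length $\ell+a/d$, i.e.\ growth factor $1+a/(\ell d)$; nothing prevents every remaining $a\in A$ from being $o(\ell d)$, in which case growth is sub-geometric and $O(s)$ steps do not reach $cs^2$. Your escape is to call any such situation a ``stall'' and recurse on a denser sub-instance, but you have not argued that a stall forces a positive fraction of $A$ into a dilate $d\Z$ or a short window, nor that density jumps by a fixed constant so that the recursion terminates in $O(1)$ rounds with the exponents $|A|^2$, $|A|^3$ intact. Points (a), (b), (c) as written are a restatement of the difficulty rather than a reduction of it; in particular (c) asks for a quantitative rank-$2$ Freiman statement at exactly the critical scale, which is essentially the lemma itself.

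The Szemer\'edi--Vu argument is organized quite differently and sidesteps this trap. One does not grow a single object one point at a time. Instead $A$ is split into many small pieces $A_i$, one passes to sets $B_i\subseteq l^\ast A_i$ each already of size $\Theta(|A|)$, and iterates $B_i^t+B_j^t+\{a_1,\dots,a_h\}$ in parallel, stopping at the first round $k$ where the common cardinality $b_k$ fails to grow by a large constant. That stopping time yields small doubling for $B_1^k$, so Freiman's theorem places it in a bounded-rank GAP; one then manufactures many GAPs which are translates of one another from the remaining $B_i^k$, sums them, and runs a rank-reduction argument, where the volume constraint $|Q|\le |A|n$ is exactly what forces the final rank to be at most $2$. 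The ``growth-or-structure'' dichotomy is thus applied to sumsets of large sets, where Freiman genuinely applies, rather than to a single absorption step, and the rank bound falls out of the ambient volume rather than from an ad hoc inverse statement.
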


Ideally, the next step would be showing that a large proper GAP $Q$ (which is a subset of $[|A| n]$) contains a square. Thanks to strong tools from number theory, this is not too hard (though not entirely trivial)  if $Q$ is homogeneous. However, we do not know how to force this assumption.

The assumption of homogeneity is essential, as without this, one can easily run into local obstructions. For example,  if $Q$ is a GAP of the form

$$ \{a_0 + a_1 x_1 + a_2 x_2| 0\le x_i \le L \} $$

where both $a_1$ and $a_2$ are divisible by $6$, but $a_0 \equiv 2 (\bmod 6)$, then
clearly $Q$ cannot contain a square, as $2$ is not a square modulo $6$.

In order to overcome this obstacle, we need to add several twists to the plan. First, we are going to use only a small subset $A'$ of $A$ to create a large GAP $Q$. Assume that $Q$ has the form

$$ \{ a_0 + a_1x_1 + a_2 x_2 | 0 \le x_i \le L \} . $$

($Q$ can also have rank one but that is the simpler case.)
Let $q$ be the g.c.d of $a_1$ and $a_2$. If $a_0$ is a square modulo $q$, then there is no local obstruction and in principle we can treat $Q$ as if it was homogeneous.

In the next move, we  try to add the remaining elements of $A$ (from $A^{''}:= A \backslash A'$) to $a_0$ to make it  a square modulo $q$. This, however, faces another local obstruction. For instance, if in the above example, all elements of $A^{''}$ are divisible by $6$, then $a_0$ will always be $2 (\bmod 6)$ no matter how we add elements from $A^{''}$ to it.

Now comes a key point. A careful analysis  reveals that having all elements of $A^{''}$ divisible by the same integer (larger than one, of course) is the {\it only} obstruction. Thus, we obtain a useful dichotomy: either $S_A$ contains a square or there is an integer $p >1$ which is divisible by all elements of a large subset $A^{''}$ of $A$.

Now we keep working with $A^{''}$. We can write this set as
$\{pb \,\,| b\in B\}$ where $B$ is a subset of $[n/p]$. In order to show that $S_{A^{''}}$ contains a square, it suffices to  show that $S_B$ contains a number of the form $pz^2$. This explains the necessity  of Theorem \ref{theorem:square:p}.

A nice feature of the above plan is that it also works for the more general problem considered  in Theorem \ref{theorem:square:p}. We are going to iterate, setting new $A:= A^{''}$ of the previous step. Since the number of
iterations (i.e., the number of $p$'s) is only $O(\log n)$, if we have
$|A^{''}| \ge (1- \frac{1}{(\log n)^c}) |A|$ in each step, for a sufficiently large constant $c$, then the set $A^{''}$ will never be empty and this guarantees that the process should terminate at some point, yielding the desired result.

In the next lemma, which is the main lemma of the paper,
 we put these arguments into a quantitative form.

\begin{lemma}\label{lemma:main2} The followings holds for any sufficiently large constant $C$.  Let  $p$ be positive
integer less than $n^{2/3}(\log n)^{-C}$  and $A$ be a subset of $ [n/p]$ of cardinality $n^{1/3} (\log n)^C$. Then  there
exists $A'\subset A$ of cardinality $|A'|\le n^{1/3}(\log n)^{C/3}$
such that one of the followings holds (with $A'' :=A\backslash A'$)

\begin{itemize}

\item $S_{A'}$ contains a GAP $$Q=\{r+qx \,\,| 0\le x\le L\}$$ where $L\ge
n^{2/3}(\log n)^{C/4}$ and  $q\le  \frac{n^{2/3}(\log n)^{C/12}}{p}$ and $r\equiv pz^2 (\bmod{q})$ for some integer $z$.

\vskip2mm

\item $S_{A'}$ contains a proper GAP $$Q=\{r+q(q_1x_1+q_2x_2) \,\,| 0\le x_1\le L_1,0\le x_2\le
L_2,(q_1,q_2)=1\}$$  such that $\min(L_1,L_2)\ge n^{1/3}(\log n)^{C/4},
L_1L_2 \ge n (\log n)^{C/2}, q\le \frac{n^{1/3}}{(\log n)^{C/6} p}$ and $r \equiv
pz^2 (\bmod{q})$ for some integer $z$.

\vskip2mm

\item There exists an integer $d>1$ such that $d|a$ for all $a\in
A''$.

\end{itemize}

\end{lemma}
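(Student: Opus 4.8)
The plan is to separate a single genuine arithmetic obstruction from the rest of the argument, to invoke Lemma~\ref{lemma:SV} to manufacture the GAP, and then to ``correct'' its base point using the leftover elements of $A$, all the while keeping careful track of how many elements of $A$ each step consumes.

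\textbf{Step 1 (removing a bad prime).} First I would test whether there is a prime $\ell$ dividing all but at most $n^{1/3}(\log n)^{C/3}$ of the elements of $A$. If so, set $A':=\{a\in A:\ell\nmid a\}$; then $d:=\ell>1$ divides every element of $A''=A\setminus A'$, so the third alternative holds and we are done. Hence we may assume henceforth that for \emph{every} prime $\ell$ the set $A$ contains more than $n^{1/3}(\log n)^{C/3}$ elements not divisible by $\ell$. (This is automatic once $\ell$ exceeds $n/p$, so it constrains only small primes.)

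\textbf{Step 2 (producing the GAP).} Pick any $A_0\subseteq A$ with $|A_0|$ a suitable constant times $n^{1/3}(\log n)^{C/6}$. Since $(n/p)^{1/3}\le n^{1/3}$, the hypothesis of Lemma~\ref{lemma:SV} holds for $A_0\subseteq[n/p]$, and it produces inside $S_{A_0}$ either an AP of length $\ge c|A_0|^2$ or a proper rank-$2$ GAP of volume $\ge c|A_0|^3$. In the rank-$2$ case, if one side length is below $n^{1/3}(\log n)^{C/4}$ then the other exceeds $n^{2/3}(\log n)^{C/4}$ and we revert to the rank-$1$ case; so we may assume both side lengths are $\ge n^{1/3}(\log n)^{C/4}$ and the volume is $\ge n(\log n)^{C/2}$. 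Let $a_0$ be the base point, $q$ the common difference (rank $1$) or the g.c.d.\ of the two steps (rank $2$), and write the steps in the rank-$2$ case as $qq_1,qq_2$ with $(q_1,q_2)=1$. All the points of this GAP are partial sums of $A_0$, hence lie in $[0,|A_0|n/p]$, and they all lie in one residue class modulo $q$; counting the integers of that class in that range forces $c|A_0|^2\le|A_0|n/(pq)+1$ (resp.\ $c|A_0|^3\le|A_0|n/(pq)+1$), and a one-line computation then gives $q\le n^{2/3}(\log n)^{C/12}/p$ in the rank-$1$ case and $q\le n^{1/3}/((\log n)^{C/6}p)$ in the rank-$2$ case, together with the required lower bounds on the length / side lengths / volume (here $C$ is taken large).

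\textbf{Step 3 (correcting the base point --- the crux).} Set $R:=A\setminus A_0$; by Step~1, for every prime $\ell$ the set $R$ still has more than $\tfrac12 n^{1/3}(\log n)^{C/3}$ elements prime to $\ell$. It suffices to find $B\subseteq R$ with $|B|\le n^{1/3}(\log n)^{C/3}-|A_0|$ and an integer $z$ with $a_0+\sigma(B)\equiv pz^2\pmod q$, where $\sigma(B)=\sum_{b\in B}b$: then $A':=A_0\cup B$ satisfies $|A'|\le n^{1/3}(\log n)^{C/3}$, and the translate $Q:=Q_0+\sigma(B)$ is a (proper, in rank $2$) GAP inside $S_{A'}$ with the same step data and size as $Q_0$ but with base point $r=a_0+\sigma(B)\equiv pz^2\pmod q$, which is exactly the first or second alternative. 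I would do this prime power by prime power. Write $q=\prod_{i=1}^{k}\ell_i^{e_i}$, so $k=O(\log n)$ and $\sum_i\ell_i^{e_i}\le q$, whence $\sum_i\sqrt{\ell_i^{e_i}}\le\sqrt{q\log q}\le n^{1/3}(\log n)^{C/24+1}$. For each $i$ choose a block $R_i\subseteq R$ of $\ell_i$-prime elements, the blocks pairwise disjoint, of size $\approx\sqrt{\ell_i^{e_i}}(\log n)^{O(1)}$ when $\ell_i\nmid p$ and of size $\le\ell_i^{e_i}$ in general --- the latter still being acceptable because in that case $\ell_i^{e_i}\le\min(p,q)\le\sqrt{pq}\le n^{1/3}(\log n)^{C/24}$. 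Since the elements of $R_i$ are units modulo $\ell_i$, a bound on the length of arithmetic progressions inside restricted sumsets (in the spirit of \cite{SzemVu1}), together with the Pólya--Vinogradov/Weil estimate and the fact that $\{pz^2\bmod\ell_i^{e_i}\}$ has density bounded below on the subgroup of $\BBZ/\ell_i^{e_i}\BBZ$ it generates, shows that the partial sums of $R_i$ hit, modulo $\ell_i^{e_i}$, \emph{every} translate of $\{pz^2\bmod\ell_i^{e_i}\}$. Feeding this into the Chinese Remainder Theorem --- and absorbing the cross terms $\sum_{j\ne i}\sigma(B_j)\bmod\ell_i^{e_i}$ using precisely this ``every translate'' freedom --- yields $B$ with $|B|\le\sum_i|R_i|\le n^{1/3}(\log n)^{C/3}-|A_0|$.

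What remains is the bookkeeping that all the inequalities in the first two alternatives hold with room to spare, which is routine once $C$ is large. \emph{The main obstacle is Step~3}: one must defeat the local obstruction at every prime-power divisor of $q$ simultaneously while spending only $n^{1/3}(\log n)^{C/3}$ elements of $A$ in total. Isolating the unique genuine obstruction --- a prime dividing almost all of $A$ --- in Step~1 is what makes this feasible, and the technical heart is matching the additive structure of partial sums against the multiplicative structure of the squares modulo $q$ via character sums.
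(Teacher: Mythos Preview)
Your outline diverges from the paper in a fundamental way, and the divergence creates a real gap. The paper does \emph{not} invoke Lemma~\ref{lemma:SV} directly; it first proves the sharper Lemma~\ref{lemma:main1}, which produces the GAP $Q$ \emph{together with} a covering $A''\subset\{s_1,\dots,s_m\}+Q$ with $m=O(1)$. That covering is the whole point: it forces the elements of $A''$ into $O(1)$ residue classes $s_1',\dots,s_m'$ modulo $q$, so the correction step becomes the linear problem $s_1'x_1+\dots+s_m'x_m+r\equiv pz^2\pmod q$ with bounded $x_i$, which is exactly Lemma~\ref{lemma:main3}. Your Step~2 uses only the bare GAP from Lemma~\ref{lemma:SV}, so you know nothing about the residues of $A''$ modulo $q$, and Step~3 must do much more work.

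The claim carrying Step~3 --- that $\approx\sqrt{\ell^{e}}(\log n)^{O(1)}$ elements of $R$ that are units modulo $\ell$ have subset sums hitting every translate of $\{pz^2\bmod\ell^{e}\}$ --- is asserted, not proved, and it is not a consequence of anything in the paper. Iterated Cauchy--Davenport only gives you a subset-sum set of size $\approx\sqrt{\ell^{e}}$, and a generic set of that size need not meet every translate of the quadratic residues; you would need an additional structural statement (a long AP inside the subset sums, say) followed by a Weyl/Burgess argument, and you have not supplied either. Two further errors: the bound $\ell_i^{e_i}\le\min(p,q)$ in the case $\ell_i\mid p$ is simply false (take $p=2$, $q=2^{10}$); and ``revert to the rank-$1$ case'' when one side of the rank-$2$ GAP is short is not free --- the effective modulus becomes $qq_2$ rather than $q$, and in the paper this is precisely Subcase~2.2, the most delicate part of the argument, which leans heavily on the covering structure of Lemma~\ref{lemma:main1} (via the multiset $T$ and Proposition~\ref{fact:lemma2:1}). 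Without that structure you have no handle on the short direction at all.

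In short: the paper's route through Lemma~\ref{lemma:main1} is not an aesthetic choice; the covering by $O(1)$ translates is what makes the quadratic-residue correction a finite-variable problem. Your prime-by-prime CRT scheme is an interesting alternative idea, but as written it rests on an unproven subset-sum lemma, a false inequality, and an unhandled subcase.
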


Given this lemma, we can argue as before and show that
 after some iterations, one of the first two cases must occur.
 We show that in these cases the GAP $Q$ should contain a number of the form $pz^2$, using classical tools
from number theory (see Section \ref{section:dim1} and Section \ref{section:dim2}).

\noindent The proof of Lemma \ref{lemma:main2} is technical and requires a preparation involving tools  from both combinatorics and number theory.
These tools will be the focus of the next two sections.

\section{Tools from additive combinatorics}

\noindent This section contains tools from additive combinatorics, which will be useful in the proof of Lemmas \ref{lemma:main1} and \ref{lemma:main2}.
Let $X,Y$ be two sets of numbers. We define

$$X+Y :=\{ x+y \,\,|x\in X, y \in Y \}; X-Y :=\{x-y \,\,| x\in X, y \in Y \}. $$

A translate of a set $X$ is a set $X'$ of the form $X':=\{a+x \,\,| x\in X \}$. For instance, every GAP is a translate of a homogeneous GAP.

The first tool is the so-called Covering lemma, due to Ruzsa
(see \cite{Ruzsa} or  \cite[Lemma 2.14]{TVbook}).

\begin{lemma}[Covering Lemma]\label{lemma:Ruzsa}
Assume that $X,Y$ are finite sets of integers. Then $X$ is
covered by at most $|X+Y|/|Y|$ translates of $Y-Y$.
\end{lemma}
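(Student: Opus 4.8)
The plan is to run Ruzsa's standard greedy argument built around a maximal family of disjoint translates of $Y$. First I would choose a subset $\{x_1,\dots,x_k\}\subseteq X$ that is \emph{maximal} with respect to the property that the translates $x_1+Y,\dots,x_k+Y$ are pairwise disjoint. Such a family exists because $X$ is finite; the only degenerate case, $Y=\emptyset$, makes the asserted bound $|X+Y|/|Y|$ meaningless and can be set aside, so I will assume $Y\neq\emptyset$.

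For the cardinality bound on $k$, I would observe that each $x_i+Y$ is contained in $X+Y$, that translation is a bijection so $|x_i+Y|=|Y|$, and that the $x_i+Y$ are pairwise disjoint. Hence
\[
k\,|Y|=\sum_{i=1}^{k}|x_i+Y|=\Big|\bigcup_{i=1}^{k}(x_i+Y)\Big|\le |X+Y|,
\]
which gives $k\le |X+Y|/|Y|$ (and therefore $k\le \floor{|X+Y|/|Y|}$ if one wants an integer bound).

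For the covering step I would invoke maximality: given any $x\in X$, the translate $x+Y$ cannot be disjoint from every $x_i+Y$, for otherwise $\{x_1,\dots,x_k,x\}$ would be a strictly larger family with the same disjointness property, contradicting maximality. So $x+y=x_i+y'$ for some index $i$ and some $y,y'\in Y$, whence $x=x_i+(y'-y)\in x_i+(Y-Y)$. Thus $X\subseteq\bigcup_{i=1}^{k}\bigl(x_i+(Y-Y)\bigr)$, a union of at most $|X+Y|/|Y|$ translates of $Y-Y$, which is exactly the claim.

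I do not anticipate a genuine obstacle here: the argument is elementary and self-contained, and the only points requiring a moment's care are the edge case $Y=\emptyset$ and the routine bookkeeping that converts the inequality $k|Y|\le |X+Y|$ into the stated bound on the number of translates.
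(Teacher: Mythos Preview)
Your argument is correct and is precisely the standard proof of Ruzsa's covering lemma. The paper does not supply its own proof of this statement; it merely cites \cite{Ruzsa} and \cite[Lemma 2.14]{TVbook}, where the same maximal-disjoint-translates argument you give is carried out.
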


The second tool is  the  powerful inverse theorem of Freiman
\cite{Fre}, \cite[Chapter 5]{TVbook}

\begin{lemma}[Freiman's inverse theorem]\label{theorem:Fre} Let
  $\gamma$ be a given positive number. Let $X$ be a set in $\Z$ such that $|X+X| \le
\gamma |X|$. Then  there exists a proper
GAP $P$ of rank at most $d=d(\gamma)$ and cardinality
$O_{\gamma}(|X|)$ that contains  $X$.
\end{lemma}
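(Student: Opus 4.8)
Since Lemma~\ref{theorem:Fre} is Freiman's classical inverse theorem, which the paper uses as a black box, the plan is to recall the architecture of the standard Fourier-analytic proof (Ruzsa's argument, cf.\ \cite[Chapter 5]{TVbook}); nothing genuinely new is needed here. First I would upgrade the hypothesis $|X+X|\le\gamma|X|$ to control of all iterated sumsets via the Pl\"unnecke--Ruzsa inequalities, obtaining $|sX-tX|\le\gamma^{O(s+t)}|X|$ for every fixed $s,t$. Next I would model $X$ inside a cyclic group: by Ruzsa's modelling lemma there is a prime $N=O_\gamma(|X|)$ and a set $Y\subseteq\Z/N\Z$ with $|Y|=|X|$, relative density $\alpha:=|Y|/N\gg_\gamma 1$, that is Freiman $s$-isomorphic to $X$ for a suitable absolute constant $s$ (one may take $s=8$), so that the additive combinatorics of $X$ up to order $s$ is faithfully mirrored by $Y$. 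Then I would run Bogolyubov's method on $\Z/N\Z$: writing $f=\mathbf{1}_Y$, the function $f*f*\tilde f*\tilde f$ (with $\tilde f(x)=f(-x)$) is nonnegative, supported on $2Y-2Y$, and equals, up to normalisation, the positive-definite combination $\sum_{\xi}|\hat f(\xi)|^4e(x\xi/N)$. Isolating the large spectrum $\Lambda:=\{\xi:|\hat f(\xi)|\ge\frac12\sqrt{\alpha}\,|Y|\}$, which has $|\Lambda|\le 4\alpha^{-2}=O_\gamma(1)$ by Parseval, one checks that the Bohr set
$$ B:=\set{\, x\in\Z/N\Z : \Norm{x\xi/N}\le 1/4 \ \ \forall\, \xi\in\Lambda \,} $$
is contained in $2Y-2Y$.

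The next step is geometric: a Bohr set of rank $d_0=|\Lambda|$ in $\Z/N\Z$ contains a \emph{proper} GAP of rank at most $d_0$ and cardinality $\gg_\gamma N\gg_\gamma|X|$. I would obtain this by viewing the congruences defining $B$ as a lattice in $\Z^{d_0}$ and applying Minkowski's second theorem on successive minima to produce a box of integer points on which the map $x\mapsto(x\xi/N)_{\xi\in\Lambda}$ is injective and stays inside the Bohr region. Hence $2Y-2Y$ contains a large proper GAP $P$ of rank $r_0=r_0(\gamma)$ and size $\gg_\gamma|X|$.

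It remains to descend back to $X$. Since $|Y+P|\le|3Y-2Y|\le\gamma^{O(1)}|X|=O_\gamma(|P|)$, the Covering Lemma~\ref{lemma:Ruzsa} covers $Y$ by $m=O_\gamma(1)$ translates of $P-P$; the union of these translates lies inside a GAP of rank $r_0+m$ and cardinality $2^m|P-P|=O_\gamma(|X|)$, which I would then replace by a containing proper GAP of the same rank and comparable cardinality using the standard fact that every GAP embeds in a proper GAP of the same rank at bounded multiplicative cost. Finally, pulling this GAP back through the Freiman $s$-isomorphism --- this is the point where $s$ must not be taken too small, so that the isomorphism transfers the relation ``$Y$ is contained in a GAP'' into ``$X$ is contained in a GAP'' --- produces a proper GAP $Q\supseteq X$ of rank $d=d(\gamma)$ and $|Q|=O_\gamma(|X|)$, which is the assertion.

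The crux lies in the two middle steps. The Bogolyubov step must be carried out so that the number of large frequencies, hence the eventual rank, depends only on $\gamma$ --- it is precisely Parseval that keeps $|\Lambda|$ bounded --- and one must verify that the small-spectrum tail cannot overwhelm the dominant $\xi=0$ term on $B$ (this is exactly where $\alpha\gg_\gamma 1$, inherited from the modelling lemma, is used). The geometry-of-numbers extraction of a \emph{proper} GAP of the full expected cardinality from a Bohr set is the other delicate point, and it is what is responsible for the (notoriously poor) dependence of $d(\gamma)$ on $\gamma$. By comparison the Pl\"unnecke--Ruzsa estimates, the modelling lemma, and the covering and pull-back bookkeeping are routine.
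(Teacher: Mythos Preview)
Your proposal is correct and, moreover, matches the standard Ruzsa proof as presented in \cite[Chapter~5]{TVbook}, which is exactly the reference the paper cites. Note, however, that the paper itself does not prove Lemma~\ref{theorem:Fre} at all: it is invoked as a black box from \cite{Fre} and \cite[Chapter~5]{TVbook}, so there is no ``paper's own proof'' to compare against---your sketch simply fills in what the references contain.
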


Freiman's theorem has the following variant (\cite{Fre, SzemVu1}, \cite[Chapter 5]{TVbook}, which
has a weaker conclusion, but provides the optimal estimate for the rank $d$. This lemma played a key factor in \cite{SzemVu1}.

\begin{lemma}\label{theorem:Fre1} Let
  $\gamma < 2^d$ be a given positive number. Let $X$ be a set in $\Z$ such that $|X+X| \le \gamma |X|$. Then  there exists a proper
GAP $P$ of rank at most $d$ and cardinality
$O_{\gamma}(|X|)$ that contains  $X$.
\end{lemma}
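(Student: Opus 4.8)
The plan is to obtain the optimal rank bound by bootstrapping from the general Freiman theorem (Lemma~\ref{theorem:Fre}), whose rank bound $d(\gamma)$ is far from optimal, and then performing a rank-reduction procedure in which the hypothesis $\gamma<2^d$ is the only ingredient used at the final stage.

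\emph{Step 1 (initialization).} Apply Lemma~\ref{theorem:Fre} to $X$: since $|X+X|\le\gamma|X|$, there is a proper GAP $P_0$ of some rank $r_0=d(\gamma)$ with $X\subseteq P_0$ and $|P_0|=O_\gamma(|X|)$. If $r_0\le d$ we are done; otherwise $r_0>d$, and since $r_0$ depends on $\gamma$ only, the procedure below will take just $O_\gamma(1)$ steps, so every constant that accumulates stays a function of $\gamma$ alone.

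\emph{Step 2 (rank reduction, the crux).} I would prove the following reduction step and iterate it: if $X\subseteq P$ with $P$ a proper GAP of rank $r\ge d+1$ and $|P|\le C|X|$, then $X$ is contained in a proper GAP of rank $r-1$ and cardinality $O_{C,\gamma}(|X|)$. After translating, write $P$ in homogeneous form $P=\{\sum_{i=1}^r x_ia_i:0\le x_i\le L_i\}$. The first possibility is that, along some coordinate direction $i$, $X$ meets only a bounded number (say at most $M=M(C,\gamma)$) of the rank-$(r-1)$ slices $\{x_i=\text{const}\}$; then $X$ lies in at most $M$ translates of the proper rank-$(r-1)$ sub-GAP obtained by deleting the $i$-th generator, and a short application of the Covering Lemma~\ref{lemma:Ruzsa} (together with the bound $|X+X|\le\gamma|X|$) packages $X$ into a single proper GAP of rank $r-1$ and cardinality $O_{C,\gamma}(|X|)$. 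The remaining possibility --- which is where $\gamma<2^d$ enters --- is that $X$ is ``spread out'' in every coordinate of $P$; here one uses that $|P|\le C|X|$ (so $P$ is not wasteful: $X$ genuinely uses all $r$ directions) together with $X+X\subseteq P+P$ to argue, after passing to a sub-GAP on which $P+P$ is proper, that $X+X$ must occupy a fixed proportion of the doubled box $P+P$, whence $|X+X|\ge c_r\,2^r|X|$ for a positive constant $c_r=c_r(C)$; taking the relevant constants (in particular $M$) large enough in terms of $\gamma$, this contradicts $|X+X|\le\gamma|X|<2^d|X|\le 2^{r-1}|X|$. Hence this possibility cannot occur, the reduction always succeeds, and iterating it $r_0-d=O_\gamma(1)$ times yields a proper GAP of rank $\le d$ and cardinality $O_\gamma(|X|)$ containing $X$, which is the assertion of the lemma.

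The main obstacle is precisely the dichotomy in Step 2: quantifying ``$X$ is spread out in every coordinate of an efficient GAP $P$'' into ``$|X+X|$ exceeds essentially $2^r|X|$'', with constants good enough that the slack between $\gamma$ and $2^{r-1}$ (valid since $r\ge d+1$) is not eaten up. This forces one to be careful about properness --- neither $P+P$ nor the rank-reduced GAP is automatically proper, so one repeatedly passes to proper sub-GAPs of comparable size --- and to combine the geometry of the box coordinates (a set that is dense in, and uses all directions of, a box cannot have small doubling) with a Ruzsa-type covering bound to fold the $O(M)$ translates back into one GAP. Once this reduction lemma is in hand, the initialization, the iteration, and the bookkeeping of constants are routine.
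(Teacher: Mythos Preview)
The paper does not prove this lemma; it is quoted from \cite{Fre}, \cite{SzemVu1}, and \cite[Chapter~5]{TVbook}, so there is no in-paper argument to compare yours against. I can only comment on the viability of your plan on its own terms.

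Your outline has the right shape --- initialize with the crude Freiman theorem and then reduce rank --- but both branches of the dichotomy in Step~2 have genuine gaps. In Case~A, passing from ``$X$ lies in $M$ translates of a rank-$(r-1)$ GAP $Q$'' to ``$X$ lies in a single proper rank-$(r-1)$ GAP of size $O_{C,\gamma}(|X|)$'' is not accomplished by the Covering Lemma together with the doubling bound: what that step actually buys you is a rank-$r$ container for $X$ whose $i$-th side has shrunk to length $M-1$, and absorbing that short side into the remaining $r-1$ generators would require an integer linear relation among the generators that need not exist. (This is precisely why the Green--Tao refinement, Lemma~\ref{lemma:GreenTao}, is stated as a \emph{covering} by $O(1)$ translates rather than containment in a single GAP.) In Case~B, the inequality you need in the critical regime $r=d+1$, $\gamma\uparrow 2^d$ is $|X+X|>\tfrac12\cdot 2^{r}|X|$; but the only general lower bound one gets from ``$X$ is dense and full-dimensional in a box'' is Freiman's dimension lemma $|X+X|\ge (r+1)|X|-O(r^2)$, which falls far short of $2^{r-1}|X|$ once $r\ge 4$. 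Upgrading this to an exponential-in-$r$ lower bound is essentially equivalent to the statement you are trying to prove. The standard proofs avoid this trap by a different mechanism: instead of bounding $|X+X|$ from below, they either detect non-properness of a dilate of $P$ (an integer relation among the generators) and use it to rewrite $P$ with one fewer generator, or they pass through a Freiman isomorphism into $\Z^d$ and invoke geometry of numbers.
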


This lemma will not be sufficient for our purpose here. We are going to need
the following refinement, which can be proved by combining Lemma \ref{theorem:Fre1} and the Covering lemma.

\begin{lemma}\label{lemma:GreenTao} \cite{GT} \cite[Chapter 5]{TVbook} Let
  $\gamma, \delta$ be positive constants. Let $X$ be a set in $\Z$ such that $|X+X| \le
\gamma |X|$. Then  there exists a proper
GAP $P$ of rank at most $\lfloor \log_2\gamma+\delta \rfloor$ and cardinality
$O_{\gamma,\delta}(|X|)$ such that $X$ is covered by
$O_{\gamma,\delta}(1)$ translates of P.
\end{lemma}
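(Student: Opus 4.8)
The plan is to deduce the statement from Lemma \ref{theorem:Fre1} — which, applied directly, costs one extra unit of rank — by first passing to the sharp Freiman--Bilu rank bound of \cite{GT} and then using the Covering Lemma to convert a ``containment'' conclusion into a ``covered by boundedly many translates'' conclusion. Set $d:=\lfloor\log_2\gamma+\delta\rfloor$ and assume $\delta<1$. If $\gamma<2^{d}$ there is nothing to do beyond Lemma \ref{theorem:Fre1}: it produces a proper GAP $P$ of rank $\le d$ and cardinality $O_\gamma(|X|)$ with $X\subseteq P$, so a single translate of $P$ covers $X$. The substantive range is $2^{d}\le\gamma$, where necessarily $d=\lfloor\log_2\gamma\rfloor$ and $2^{d}\le\gamma<2^{d+1}$; here Lemma \ref{theorem:Fre1} only gives a proper GAP of rank $\le d+1$, and the whole difficulty is to trim this last unit of rank, which is precisely the Freiman--Bilu phenomenon.

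To carry this out I would first normalise the ambient group. By Freiman's theorem (Lemma \ref{theorem:Fre}), and after a standard enlargement by a bounded factor, $X$ lies in a proper GAP $Q_{0}$ of rank $r_{0}=O_\gamma(1)$ and cardinality $O_\gamma(|X|)$ whose coordinate map $\phi$ is a Freiman isomorphism of order $2$ onto a box $B\subseteq\Z^{r_{0}}$. Writing $\widetilde X:=\phi(X)\subseteq B$, the map $\phi$ preserves all additive quadruples, so $|\widetilde X+\widetilde X|=|X+X|\le\gamma|\widetilde X|$ while $|B|=O_\gamma(|\widetilde X|)$. On this normalised data I would invoke the heart of \cite{GT}: replace $\widetilde X$ by a Freiman-equivalent compressed set, pass to its convex hull, and run the Brunn--Minkowski / convex-geometry estimate which, exploiting $|\widetilde X+\widetilde X|<2^{d+1}|\widetilde X|$ together with the bounded density of $\widetilde X$ in $B$, bounds the dimension of that convex hull by $d$. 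Discretising back, this yields a proper GAP $\Pi\subseteq\Z^{r_{0}}$ of rank $\le d$ and cardinality $O_{\gamma,\delta}(|X|)$ and a subset $X_{1}\subseteq X$ with $|X_{1}|\ge c(\gamma,\delta)|X|$ whose $\phi$-image lies in $\Pi$; pulling $\Pi$ back through $\phi$, $X_{1}$ lies in a proper GAP $P$ of rank $\le d$ and cardinality $O_{\gamma,\delta}(|X|)$.

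It remains to pass from $X_{1}$ to all of $X$, and this is the role of the Covering Lemma. Applying Lemma \ref{lemma:Ruzsa} with $Y=X_{1}$, the set $X$ is covered by at most $|X+X_{1}|/|X_{1}|\le|X+X|/|X_{1}|\le\gamma|X|/|X_{1}|=O_{\gamma,\delta}(1)$ translates of $X_{1}-X_{1}\subseteq P-P$. Finally $P-P$ is a GAP of rank $\le d$ with the sizes of $P$ merely doubled, hence — by the standard fact that a GAP of rank $\le d$ is contained in a proper GAP of rank $\le d$ and comparable cardinality — is contained in a proper GAP $P'$ of rank $\le d$ and cardinality $O_{\gamma,\delta}(|X|)$. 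Thus $X$ is covered by $O_{\gamma,\delta}(1)$ translates of the proper GAP $P'$ of rank $\le d=\lfloor\log_2\gamma+\delta\rfloor$ and cardinality $O_{\gamma,\delta}(|X|)$, as required. The one nontrivial ingredient is the convex-geometry argument of \cite{GT} that supplies the optimal rank $\lfloor\log_2\gamma+\delta\rfloor$ in place of the $\lceil\log_2\gamma\rceil$ afforded by Lemma \ref{theorem:Fre1}; the Freiman normalisation, the properness refinement of GAPs, and the final covering step are all routine.
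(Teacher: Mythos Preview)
The paper does not give its own proof of this lemma; it merely cites \cite{GT} and \cite[Chapter~5]{TVbook} and remarks that the statement ``can be proved by combining Lemma~\ref{theorem:Fre1} and the Covering lemma.'' Your proposal correctly fleshes out exactly this approach --- the Freiman--Bilu/Green--Tao convex-geometry step to locate a dense subset $X_1$ inside a rank-$d$ proper GAP, followed by Ruzsa's covering lemma to pass from $X_1$ to all of $X$ --- and you are right to flag that in the boundary range $2^d\le\gamma<2^{d+1}$ the convex-geometry input of \cite{GT} is genuinely needed, since Lemma~\ref{theorem:Fre1} alone would only yield rank $d+1$ there.
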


\noindent We say that a GAP $Q = \{a_{0} + x_{1} a_{1 } + \dots x_{d}a_{d} | 0
\le x_{i} \le L_{i}\}$ is {\it positive} if its steps $a_i$'s
are positive. A useful observation is that if the  elements of $Q$ are
positive, then $Q$ itself can be brought into a positive form.

\begin{lemma} \label{lemma:positive}
A   GAP with positive elements can be
brought into a  positive form.
\end{lemma}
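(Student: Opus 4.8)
The plan is to prove the statement by induction on the rank $d$, reducing the number of "bad" (non-positive) steps one at a time. Let $Q = \{a_0 + x_1 a_1 + \dots + x_d a_d \mid 0 \le x_i \le L_i\}$ be a GAP all of whose elements are positive integers, and suppose some step, say $a_1$, is negative (the case $a_1 = 0$ is trivial since that coordinate contributes nothing and can be dropped). The key observation is the elementary change of variables $x_1 \mapsto L_1 - x_1'$: as $x_1$ ranges over $\{0,1,\dots,L_1\}$ so does $x_1'$, and $x_1 a_1 = L_1 a_1 - x_1' a_1 = L_1 a_1 + x_1'(-a_1)$. Hence
$$
Q = \{ (a_0 + L_1 a_1) + x_1' (-a_1) + x_2 a_2 + \dots + x_d a_d \mid 0 \le x_1' \le L_1,\ 0 \le x_i \le L_i \},
$$
which is the same set presented with new base point $a_0' := a_0 + L_1 a_1$, new first step $-a_1 > 0$, and all other steps unchanged. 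So a single negative step can be flipped to a positive one at the cost of only changing $a_0$; iterating over all the negative steps, we may assume every $a_i$ is either positive or zero, and after discarding the zero steps we obtain a presentation of $Q$ in which all steps are strictly positive.

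It remains to observe that this final presentation is genuinely a GAP in the sense of Definition \ref{definition:1} and still describes the same set $Q$ — which is immediate, since each step was a bijective reindexing of one coordinate box $\{0,\dots,L_i\}$ onto itself, so the collection of represented sums is unchanged. (Note that properness is neither assumed nor claimed: the coordinate flips are bijections on each box, so if $Q$ happened to be proper the positive form is proper too, but we do not need this.) The hypothesis that the elements of $Q$ are positive is not actually used in the argument as I have set it up — the reindexing works for any GAP — so strictly speaking one gets the slightly stronger statement that every GAP has a presentation with nonnegative (indeed positive, after dropping zero steps) steps; the positivity hypothesis in the lemma is presumably there only because that is the form in which it will be invoked.

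The main point to be careful about is purely bookkeeping: one must check that simultaneously flipping several negative steps does not interfere, which is clear because the substitutions $x_i \mapsto L_i - x_i'$ act on disjoint coordinates and each only modifies the common base point additively, so the order of the flips is irrelevant and the procedure terminates after at most $d$ steps. There is no real obstacle here; the lemma is essentially a normalization remark, and the only thing worth stating explicitly is the box-reflection identity above.
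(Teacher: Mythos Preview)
Your proof is correct and uses exactly the same idea as the paper: the reflection $x_i \mapsto L_i - x_i$ on each coordinate with a negative step, absorbing $L_i a_i$ into the base point. The only cosmetic differences are that the paper flips all negative coordinates simultaneously rather than one at a time, and it invokes the positivity of the elements to observe that the new base point $a_0' = a_0 + \sum_{a_i<0} L_i a_i$ is itself positive (being an element of $Q$); as you note, this last point is not required by the stated definition of ``positive form'', which concerns only the steps.
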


\begin{proof} (Proof of Lemma \ref{lemma:positive}.)  Assume that

$$Q= \{a_{0} + x_{1} a_{1 } + \dots x_{d}a_{d} | 0 \le x_{i} \le L_{i}\}. $$

By setting $x_{i}=0$, we can conclude that $a_{0} >0$. Without loss
of generality, assume that $a_{1}, \dots, a_{j } <0$ and $a_{j+1},
\dots, a_{d} >0$. By setting $x_{i}=0$ for all $i >j$ and
$x_{i}=L_{i}, i \le j$, we have

$$a'_{0} := a_{0} + a_{1}L_{1} + \dots a_{j} L_{j} >0. $$

Now we can rewrite $Q$ as

$$Q:= \{a_{0}' + x_{1}(-a_{1}) + \dots+ x_{j} (-a_{j}) + x_{j+1} a_{j+1} + \dots x_{d}a_{d}| 0 \le x_{i } \le L_{i}\},
$$

\noindent completing the proof.\end{proof}

\noindent Since we only deal with positive integers, this lemma allows us to  assume that all GAPs  arising in the proof are  in positive
form.

  Using the above tools and ideas from \cite{SzemVu1}, we will prove Lemma \ref{lemma:main1} below, which asserts that if a  set $A$ of $[n/p]$ is sufficiently dense, then there
exists a small set $A'\subset A$ whose subset sums contain a large
GAP $Q$ of small rank. Furthermore, the set $A''=A\backslash
A'$ is contained in only a few translates of $Q$. This lemma will serve as a base from which we will attack Lemma \ref{lemma:main2}, using number theoretical tools discussed in the next section.

\begin{lemma}\label{lemma:main1} The following holds for all sufficiently large constant  $C$. Let $p$ be positive
integer less than $n^{2/3}(\log n)^{-C}$ and  $A$ be a subset of $ [n/p]$ of cardinality $n^{1/3}(\log n)^C$. Then there
exists a subset $A'$ of $ A$ of cardinality $|A'|\le n^{1/3}(\log n)^{C/3}$
such that one of the followings holds (with $A'':=A\backslash A'$):

\begin{itemize}

\item $S_{A'}$ contains an AP $$Q=\{r+qx \,\,| 0\le x\le L\}$$ where  $L\ge
n^{2/3}(\log n)^{C/2}$ and there exist $m=O(1)$ different numbers
$s_1,\dots,s_m$ such that $A''\subset \{s_1,\dots,s_m\}+Q$.

\item $S_{A'}$ contains a proper GAP $$Q=\{r+a_1x_1+a_2x_2) \,\,| 0\le x_1\le L_1,0\le x_2\le
L_2$$ such that $L_1L_2 \ge n (\log n)^{C/2} \}$ and there
exists $m=O(1)$ numbers $s_1,\dots,s_m$ such that $A''\subset
\{s_1,\dots,s_m\}+Q$.

\end{itemize}

\end{lemma}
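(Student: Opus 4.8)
The plan is to obtain the GAP $Q$ inside $S_{A'}$ as a near-immediate consequence of Lemma~\ref{lemma:SV}, and to devote the real work to arranging that $A''$ falls into $O(1)$ translates of $Q$. The ``size of $Q$'' demands are generous: for \emph{any} subset $A'\subseteq A$ with $|A'|=n^{1/3}(\log n)^{C/3}$ we have $A'\subseteq[n/p]\subseteq[n]$ and $|A'|$ above the threshold of Lemma~\ref{lemma:SV} (since $|A'|\ge n^{1/3}\ge(n/p)^{1/3}$ and $(\log n)^{C/3}$ beats any constant), so $S_{A'}$ contains either an AP $Q$ of length at least $c|A'|^2=cn^{2/3}(\log n)^{2C/3}>n^{2/3}(\log n)^{C/2}$, or a proper rank-$2$ GAP $Q$ of cardinality at least $c|A'|^3=cn(\log n)^C$, in which case its sizes obey $L_1L_2\ge|Q|/4\ge n(\log n)^{C/2}$ — both bullets' quantitative requirements are met once $C$ is large. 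Also $|A''|=|A|-|A'|$ is still at least half of $|A|$, so $A''$ stays as dense as $A$. Everything thus reduces to selecting $A'$ so that the particular $Q$ it spawns covers $A''$ in boundedly many translates.

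For that I would not use Lemma~\ref{lemma:SV} as a black box but re-run the argument of \cite{SzemVu1} on $A$, carrying along two extra records: the number of elements of $A$ consumed in assembling $Q$ (kept below $n^{1/3}(\log n)^{C/3}$ — this is what the exponent $C/3$ buys us, via Pl\"unnecke--Ruzsa bounds on how quickly iterated sumsets grow), and the location of the unused elements relative to $Q$. That argument proceeds through a Freiman-type dichotomy: either, after restricting to a large subset of $A$, a one-dimensional AP of subset sums already reaches the required length (first bullet), or a genuinely two-dimensional structure surfaces (second bullet). In each case the construction should simultaneously yield a proper GAP $P$ of bounded rank and cardinality $O(|A|)$ covering $A$ — hence $A''$ — in $O(1)$ translates, and a GAP $Q\subseteq S_{A'}$ built compatibly with $P$, so that $A''+Q$ has cardinality $O(|Q|)$. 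Feeding $X=A''$ and $Y=$ a half of $Q$ into the Covering Lemma (Lemma~\ref{lemma:Ruzsa}) then covers $A''$ by at most $|A''+Y|/|Y|=O(1)$ translates of $Y-Y$; and because $Y$ is a half of $Q$, the set $Y-Y$ is a translate of $Q$ itself, hence still a subset of $S_{A'}$ after re-centring, which is exactly what lets us state the conclusion with a $Q$ lying in $S_{A'}$.

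The principal obstacle, I expect, is the rank bookkeeping together with the bound $|A''+Q|=O(|Q|)$. Lemma~\ref{lemma:SV}, and the conclusion of Lemma~\ref{lemma:main1}, allow only rank $\le 2$, whereas the Freiman-type input (Lemmas~\ref{theorem:Fre}--\ref{lemma:GreenTao}) describing $A$ produces a GAP $P$ whose rank grows with the doubling constant of $A$; so one needs a genuine rank reduction, converting the possibly high-rank structure around $A$ into a rank-$\le 2$ GAP inside $S_{A'}$ that is \emph{still long enough} ($L\ge n^{2/3}(\log n)^{C/2}$, resp.\ $L_1L_2\ge n(\log n)^{C/2}$) and \emph{still broad enough} to recapture $A''$ in $O(1)$ translates. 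When the rank of $P$ is small, $P$ fits inside a translate of $Q$ and the estimate on $|A''+Q|$ is immediate from $|Q+Q|=O(|Q|)$; the delicate case is when $P$ has larger rank, where instead one must exploit that $S_{A'}$ is then so rich that a single long AP suffices, letting the Covering Lemma close the estimate on its own. Forcing the three constraints — $|A'|$ small, $Q\subseteq S_{A'}$ long and of rank $\le 2$, and $A''$ inside $O(1)$ translates of $Q$ — to hold at once is what fixes the hierarchy of exponents $C,\ C/3,\ C/2$ and the threshold separating the two regimes, and is where the proof becomes technical. Once Lemma~\ref{lemma:main1} is available, the number-theoretic tools of the next section will be used to refine $Q$ so that the residue of its base point is $\equiv pz^2$ modulo the g.c.d.\ of its steps, yielding Lemma~\ref{lemma:main2}.
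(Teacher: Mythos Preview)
Your high-level reading is right: the size requirements on $Q$ come essentially for free once $|A'|=n^{1/3}(\log n)^{C/3}$, and the substance of the lemma is the covering of $A''$ by $O(1)$ translates of $Q$. But the mechanism you propose for that covering has a genuine gap, and it is not the one the paper uses.

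You want the Szemer\'edi--Vu argument to hand you a proper GAP $P$ of bounded rank with $|P|=O(|A|)$ covering $A$ in $O(1)$ translates, after which $|A''+Q|=O(|Q|)$ and Lemma~\ref{lemma:Ruzsa} would finish. The problem is that no such $P$ is available: there is no small-doubling hypothesis on $A$ (a generic set of this density has $|A+A|\sim|A|^2$), so Lemmas~\ref{theorem:Fre}--\ref{lemma:GreenTao} do not apply to $A$; and the GAP that actually emerges from the Szemer\'edi--Vu iteration lives in $S_{A'}$, has size $\Theta(b_k)$ rather than $O(|A|)$, and a priori carries no information about where the unused elements of $A$ sit. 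Your sentence ``the construction should simultaneously yield a proper GAP $P$ \dots covering $A$'' is exactly the step that needs an idea.

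The paper supplies that idea by weaving $A''$ into the growth algorithm itself. At each step one forms $\bigcup_{d=1}^{h}(B_i^t+B_j^t+a_d)$ with the shifts $a_1,\dots,a_h$ \emph{chosen from $A''$ so as to maximise the cardinality of the union}. When the growth rate finally drops (so $b_{k+1}\le c\,b_k$), this maximality forces
\[
\Big|\bigcup_{d=1}^{h}(B_i^k+B_j^k+a_d)\Big|\le c\,|B_i^k|\qquad\text{for \emph{every} choice of }a_1,\dots,a_h\in A''.
\]
Taking $h$ a large enough constant, a short disjointness argument then shows that all of $A''$ lies in $O_c(1)$ translates of $2R-2R$, where $R$ is the Freiman GAP containing $B_1^k$. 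Thus the covering of $A''$ is a by-product of the algorithm's interaction with $A''$, not of any structural hypothesis on $A$. The subsequent steps --- producing many translates $Q_1,\dots,Q_{m'}$ of a common GAP, summing them, and reducing the rank to at most~$2$ via Lemma~\ref{lemma:GreenTao} while checking that the covering of $A''$ survives each reduction --- are closer to what you sketch, but they only make sense once this link between the algorithm and $A''$ has been forged.
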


{\it Remark.} The proof actually gives a better lower bounds for $L_1L_2$ in the second case ($2C/3$ instead of $C/2$), but this is not important in applications.

\section{Tools from number theory}

{\it Fourier Transform and Poisson summation.} Let $f$ be a function with support on $\Z$. The Fourier transform $\widehat {f}$ is defined as

$$\widehat {f} (w):= \int_{\R} f(t) e(-wt) \, \, dt . $$

The classical Poisson summation formula  asserts that

\begin{equation} \label{PSF}
\sum_{n= -\infty}^{\infty} f(t + nT) = \frac{1}{T}
\sum_{m=-\infty}^{\infty} \widehat{f} (\frac{2\pi m}{T}) e(mt/T).
\end{equation}

For more details, we refer to  \cite[Section 4.3]{IKbook}.

{\it \noindent Smooth indicator functions.} We will use the following well-known construction (see for instance \cite[Theorem
18]{Green} for details).

\begin{lemma}\label{lemma:Fourier} Let $\delta <1/16$ be a positive
constant and let $[M,M+N]$ be an interval. Then there exists a real function $f$ satisfying the
following

\begin{itemize}
\item $0\le f(x) \le 1$ for any $x\in \R.$
\item $f(x)=0$ if $x\le M$ or $x\ge M+N.$
\item $f(x)=1$ if $M+\delta N \le x \le M+ N(1-\delta).$
\item $|\widehat{f}(\lambda)| \le 16 \widehat{f}(0) \exp(-\delta |\lambda N|^{1/2})$ for every $\lambda$.
\end{itemize}
\end{lemma}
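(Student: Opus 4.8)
The plan is to realise $f$ as an infinite convolution of rescaled indicator functions of intervals; this is the classical device for manufacturing a bump that sits on a short interval, is flat in the middle, and has Fourier transform decaying like $\exp(-c|\lambda|^{1/2})$. First I would translate so that $[M,M+N]=[-N/2,N/2]$: a translation of $f$ multiplies $\widehat f$ by a unimodular factor, so it changes neither $|\widehat f(\lambda)|$ nor $\widehat f(0)=\int_{\R}f$, and at the end one translates back.

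Fix a parameter $A>0$ to be chosen (it will be a small multiple of $\delta N$), set $a_j:=A/(j(j+1))$ so that $S:=\sum_{j\ge1}a_j=A$, and put $b:=N-S$, which is positive once $A<N$ (which we will arrange). Let $\rho_j:=a_j^{-1}\mathbf 1_{[-a_j/2,\,a_j/2]}$ be the corresponding probability densities and $\psi:=\mathbf 1_{[-b/2,\,b/2]}$, and define
$$ f:=\psi*\rho_1*\rho_2*\cdots, $$
the limit of the partial convolutions $\psi*\rho_1*\cdots*\rho_K$, which converges (uniformly, and in $L^1$) because $\sum_j a_j<\infty$. The first three bulleted properties are then routine consequences of elementary facts about convolving with a probability measure: $f\ge0$; $f\le1$ because $\psi$ (which is $\le1$) is convolved with a probability density; $\supp f\subseteq[-b/2,b/2]+[-S/2,S/2]=[-N/2,N/2]$; and $f\equiv1$ on $[-b/2+S/2,\,b/2-S/2]=[-(N/2-S),\,N/2-S]$, which contains $[-(N/2-\delta N),\,N/2-\delta N]$ as soon as $S=A\le\delta N$. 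Translating back gives the plateau $[M+\delta N,\,M+N(1-\delta)]$.

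For the Fourier bound I would use multiplicativity of the transform under convolution. With the paper's normalisation $\widehat{\mathbf 1_{[-a/2,a/2]}}(\lambda)=\sin(\pi a\lambda)/(\pi\lambda)$, so that $\widehat{\rho_j}(\lambda)=\sin(\pi a_j\lambda)/(\pi a_j\lambda)$ and $\widehat f(0)=b$, one gets
$$ |\widehat f(\lambda)|=|\widehat\psi(\lambda)|\prod_{j\ge1}\Bigl|\frac{\sin(\pi a_j\lambda)}{\pi a_j\lambda}\Bigr|\ \le\ \widehat f(0)\,\prod_{j\ge1}\min\Bigl(1,\frac{1}{\pi a_j|\lambda|}\Bigr). $$
Fix $\lambda\neq0$. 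Each index $j$ with $\pi a_j|\lambda|\ge2$, i.e. with $j(j+1)\le\pi A|\lambda|/2$, contributes a factor at most $1/2$, and there are at least $\sqrt{\pi A|\lambda|/2}-2$ such indices, so
$$ |\widehat f(\lambda)|\ \le\ \widehat f(0)\,2^{\,2-\sqrt{\pi A|\lambda|/2}}\ =\ 4\,\widehat f(0)\,\exp\!\bigl(-\ln2\,\sqrt{\pi A/2}\;|\lambda|^{1/2}\bigr). $$
Now take $A:=\delta N$ (so indeed $A<N$ and $S=A\le\delta N$); the exponent becomes $\ln2\,\sqrt{\pi\delta/2}\,(N|\lambda|)^{1/2}$, and since $\ln2\,\sqrt{\pi\delta/2}\ge\delta$ whenever $\delta\le\pi(\ln2)^2/2$ — in particular whenever $\delta<1/16$ — we conclude $|\widehat f(\lambda)|\le4\,\widehat f(0)\,\exp(-\delta|\lambda N|^{1/2})\le16\,\widehat f(0)\,\exp(-\delta|\lambda N|^{1/2})$ for every $\lambda\neq0$; the bound is trivial at $\lambda=0$.

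The convolution bookkeeping is routine; the one place that needs care is the decay estimate, and the point there is the choice of the widths $a_j$. A geometric schedule $a_j\asymp c^{-j}$ would make the number of ``large'' factors grow only like $\log|\lambda|$ and hence give merely polynomial decay of $\widehat f$; it is the slower decay $a_j\asymp j^{-2}$ of the widths that makes this count grow like $|\lambda|^{1/2}$ and so produces the sub-exponential rate $\exp(-c|\lambda|^{1/2})$. Chasing the absolute constants through that computation is precisely what forces a smallness hypothesis on $\delta$, and $\delta<1/16$, together with the slack factor $16$ in the conclusion, leaves ample room.
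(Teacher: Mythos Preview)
Your argument is correct. The paper does not actually prove this lemma: it simply quotes the statement and refers the reader to \cite[Theorem 18]{Green} for the construction. What you have written is exactly the classical infinite-convolution construction (indicators of intervals of widths $a_j\asymp j^{-2}$) that such references carry, so your approach is the expected one; you have supplied the details the paper omits, and your bookkeeping of the support, plateau, and Fourier decay is accurate, including the observation that $\ln 2\,\sqrt{\pi\delta/2}\ge\delta$ for $\delta<1/16$.
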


{\it \noindent  A Weyl type estimate.} Next, we need a Weyl type estimate for exponential sums.

\begin{lemma}\label{lemma:Weyl} For any positive constant $\ep$
  there exist positive constants $\alpha= \alpha(\ep)$ and $c(\ep)$
such that the following holds. Let $a,q$ be co-prime integers, $\theta$ be a real number, and $I$ be an interval of length $N$. Let $M$ be a positive number such that  $MN\ge q^{1+\ep}$. Then,

$$ \sum_{\scriptstyle |m| \le M \atop \scriptstyle m\neq 0}|\sum_{z\in I}e(\frac{amz^2}{q}+\theta m
z)| \le c (M\sqrt{N}+ \frac{MN}{\sqrt{q}} ) (\log MN)^{\alpha}.$$
\end{lemma}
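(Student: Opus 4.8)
The plan is to prove the Weyl-type estimate in Lemma \ref{lemma:Weyl} by combining the classical Weyl differencing method for quadratic exponential sums with a summation over the multiplier $m$. First I would fix $m\neq 0$ with $|m|\le M$ and analyze the inner sum $T(m):=\sum_{z\in I}e(\frac{amz^2}{q}+\theta m z)$. Writing $g(z)=\frac{am}{q}z^2+\theta m z$, the standard Weyl squaring trick gives
\begin{equation*}
|T(m)|^2 \le \sum_{|h|\le N}\left| \sum_{z\in I\cap(I-h)} e\!\left(\frac{2amh}{q}z + (\text{terms independent of }z)\right)\right|,
\end{equation*}
so the inner sum over $z$ is a geometric sum whose size is $O(\min(N, \|2amh/q\|^{-1}))$, where $\|\cdot\|$ denotes distance to the nearest integer. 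Thus $|T(m)|^2 \ll N + \sum_{1\le h\le N}\min(N,\|2amh/q\|^{-1})$. Since $(a,q)=1$, as $h$ ranges over a block of length $q$ the residues $2amh \bmod q$ run over a union of arithmetic progressions with common difference $2\gcd(m,q)$, and a classical divisor-type estimate (as in the treatment of $\sum_h \min(N,\|\alpha h\|^{-1})$ for rational $\alpha$, cf.\ the Weyl/van der Corput literature in \cite{IKbook}) bounds $\sum_{1\le h\le N}\min(N,\|2amh/q\|^{-1}) \ll (N + q)\,d(q)\log q \cdot \frac{\gcd(m,q)\cdots}{\cdots}$; more carefully one gets $\ll \left(\frac{N}{q}+1\right)(q + N)\log q$ up to factors involving $\gcd(2m,q)$. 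The upshot is a bound of the shape $|T(m)| \ll \left(\sqrt N + \frac{N}{\sqrt q}\sqrt{\gcd(m,q)} + \sqrt{q}\,\right)(\log Nq)^{O(1)}$, with the $\sqrt q$ term absorbable once we use the hypothesis $MN\ge q^{1+\ep}$.

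Next I would sum over $m$. The contribution of the $\sqrt N$ term is trivially $\ll M\sqrt N(\log MN)^{O(1)}$. The delicate term is $\frac{N}{\sqrt q}\sum_{1\le m\le M}\sqrt{\gcd(m,q)}$; splitting according to $e=\gcd(m,q)$ and using $\sum_{e\mid q}\sqrt e\cdot \frac{M}{e}\cdots \ll M\, d(q) \ll M (\log q)^{O(1)}$ (since $q\le (MN)^{1/(1+\ep)}$ makes $d(q)=(MN)^{o(1)}$, and in fact $d(q)\ll \exp(O(\log q/\log\log q))$ which is swallowed by a power of $\log MN$ after adjusting $\alpha$), this gives $\ll \frac{MN}{\sqrt q}(\log MN)^{O(1)}$. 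Finally the residual $\sqrt q$ terms summed over $m$ give $\ll M\sqrt q (\log MN)^{O(1)}$, and since $MN\ge q^{1+\ep}$ we have $M\sqrt q \le M\sqrt q$; to absorb this into $M\sqrt N + MN/\sqrt q$ note $\sqrt q = \sqrt q$, and $MN/\sqrt q \ge M q^{(1+\ep)/2}/\sqrt q = M q^{\ep/2}\ge M\sqrt q$ precisely when $q^{\ep/2}\ge \sqrt q$, which fails; so instead one uses that $\sqrt q \le (MN)^{1/(2(1+\ep))}\le \max(\sqrt N, N/\sqrt q)\cdot(\text{const})$ after an elementary case split on whether $q\le N$ or $q>N$. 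Collecting everything yields the claimed bound with $\alpha=\alpha(\ep)$ chosen large enough to absorb all the $d(q)$ and $\log$ factors, and $c(\ep)$ the implied constant.

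The main obstacle I expect is the bookkeeping around the $\gcd(m,q)$ and divisor factors: making sure that after the two-step process (differencing in $h$, then summing in $m$) the arithmetic-progression counting does not lose more than a $(\log MN)^{O(1)}$ factor, and in particular handling the regime where $q$ is large compared to $N$ (so that the geometric sums are not saturated and the $\min(N,\|\cdot\|^{-1})$ is genuinely $\|\cdot\|^{-1}$ for most $h$) versus $q$ small. The hypothesis $MN\ge q^{1+\ep}$ is exactly what is needed to guarantee that the "diagonal" and error terms are dominated by the two main terms $M\sqrt N$ and $MN/\sqrt q$; tracking precisely where that hypothesis enters, and confirming that $\ep$-dependence of $\alpha$ is unavoidable (it comes from $d(q)\le q^{o(1)} = (MN)^{o(1)}$ being converted to a power of $\log$ only at the cost of a constant depending on $\ep$), is the part requiring genuine care. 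Everything else is a routine, if lengthy, instance of the Weyl–van der Corput circle of ideas, so I would not grind through those estimates in detail.
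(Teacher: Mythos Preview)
Your overall plan (Weyl differencing, then summing over $m$) is the right circle of ideas, but the execution as written has two genuine gaps, and the paper's proof differs from yours precisely at the point needed to close them.

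First, your claim that $d(q)\ll \exp(O(\log q/\log\log q))$ ``is swallowed by a power of $\log MN$'' is simply false: $\exp(c\log q/\log\log q)$ is \emph{not} bounded by any fixed power of $\log q$. The same objection applies to $\sum_{e\mid q}e^{-1/2}$, which for primorial-type $q$ is of size $\exp(c\sqrt{\log q}/\log\log q)$, again super-polylogarithmic. So the term $\frac{N}{\sqrt q}\sum_{m\le M}\sqrt{\gcd(m,q)}$ does not give $\frac{MN}{\sqrt q}(\log MN)^{O(1)}$ by your argument. Second, the $M\sqrt q$ contribution cannot be absorbed when $q>N$: in that regime $\sqrt q>\sqrt N$ and $\sqrt q>N/\sqrt q$, so your case split fails (take e.g.\ $q\approx N^{3/2}$).

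Both problems stem from bounding $|T(m)|$ \emph{pointwise} and then summing. The paper instead applies Cauchy--Schwarz to the \emph{outer} sum first, obtaining
\[
S^2 \;\le\; 2M \sum_{0<|m|\le M}\ \sum_{|u|\le N}\min\!\Big(N,\ \big\|2amu/q\big\|^{-1}\Big),
\]
and then analyzes the double sum by counting, for each residue $r\pmod q$, the number $N_r$ of pairs $(m,u)$ with $2amu\equiv r\pmod q$. This is a divisor problem over the \emph{range} of products $|mu|\le MN$, so one needs $\sum_{|s|\lesssim MN/q}\tau(r_a+sq)$, and the \emph{average} of $\tau$ over an interval of length $\gg q^{\ep}$ is genuinely polylogarithmic; this is exactly where the hypothesis $MN\ge q^{1+\ep}$ enters (via the paper's auxiliary Lemma bounding $\tau(n)$ by a short divisor sum). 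In other words, the joint $(m,u)$ counting replaces your pointwise $d(q)$ by an averaged divisor sum, which is what makes the polylog loss legitimate, and it simultaneously eliminates the stray $\sqrt q$ term. If you modify your argument by summing $|T(m)|^2$ over $m$ \emph{before} taking a square root, you recover precisely the paper's proof.
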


{\it \noindent Quadratic residues.} Finally, and most relevant to our problem, we need the following lemma, which shows the existence of
integer solutions with given constrains for a quadratic equation.

\begin{lemma}\label{lemma:main3}
There is an absolute constants $D$ such that the following holds.  Let
$a_1,\dots,a_d,r, p,q$ be integers such that  $p,q>0$ and  $(a_1,\dots,a_d,q)=1$. Then the
equation
\begin{equation}\label{equation:lemma3:1}
a_1x_1+\dots+a_dx_d + r \equiv pz^2 (\bmod{q})
\end{equation}

has an integer solution $(z, x_{1}, \dots x_{d})$ satisfying $0
\le x_i \le (pq)^{1/2}(\log q)^{D} $.
\end{lemma}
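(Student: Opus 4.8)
The plan is to attack this by the circle method over $\Z/q\Z$, after a preliminary reduction to the range of parameters that is actually hard. If $p\ge q(\log q)^{-2D}$ then $(pq)^{1/2}(\log q)^{D}\ge q$, so the box $[0,(pq)^{1/2}(\log q)^{D}]$ already contains a complete residue system mod $q$; since $(a_1,\dots,a_d,q)=1$ the form $(x_1,\dots,x_d)\mapsto a_1x_1+\dots+a_dx_d$ is onto $\Z/q\Z$, so putting $z=0$ and solving $a_1x_1+\dots+a_dx_d\equiv -r\pmod q$ with $0\le x_i<q$ is enough. Hence I may assume $p<q(\log q)^{-2D}$, i.e.\ $(p/q)^{1/2}<(\log q)^{-D}$, which is the regime where there is something to prove.

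Set $K:=(pq)^{1/2}(\log q)^{D}$ and, for each $i$, let $f_i$ be a smooth bump supported on $[0,K]$ as furnished by Lemma~\ref{lemma:Fourier}, so $\widehat{f_i}(0)\asymp K$ and $\norm{\widehat{f_i}(\lambda)}\le 16\,\widehat{f_i}(0)\exp(-\delta\norm{\lambda K}^{1/2})$. Consider the smoothly weighted count of solutions
\[
\mathcal N:=\sum_{x_1,\dots,x_d\in\Z}\ \sum_{z\,(\mathrm{mod}\,q)}\Big(\prod_{i=1}^{d}f_i(x_i)\Big)\,\mathbf 1\big[\,a_1x_1+\dots+a_dx_d+r\equiv pz^2\pmod q\,\big].
\]
Detecting the congruence with additive characters and applying Poisson summation in each $x_i$ gives
\[
\mathcal N=\frac1q\sum_{m\,(\mathrm{mod}\,q)}\Big(\sum_{z\,(\mathrm{mod}\,q)}e\big(\tfrac{-mpz^2}{q}\big)\Big)\Big(\prod_{i=1}^{d}\sum_{x_i\in\Z}f_i(x_i)e\big(\tfrac{ma_ix_i}{q}\big)\Big)e\big(\tfrac{mr}{q}\big).
\]
The term $m=0$ equals $\prod_i\widehat{f_i}(0)\asymp K^{d}$ and is positive, so it suffices to bound the $m\neq 0$ contribution by, say, $\tfrac12 K^{d}$.

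For $m\neq 0$ the $z$-sum is a complete quadratic Gauss sum, of modulus at most $\sqrt{2q\gcd(mp,q)}$, and each $x_i$-factor is controlled by the Fourier decay of $f_i$: with $\Norm{ma_i/q}$ the distance of $ma_i/q$ to $\Z$, it is $\ll K\exp(-\delta(K\Norm{ma_i/q})^{1/2})$. Here the hypothesis $(a_1,\dots,a_d,q)=1$ enters decisively: for any $m\not\equiv 0$, if $g=\gcd(m,q)$ then $q/g>1$ cannot divide all of $a_1,\dots,a_d$, so at least one $x_i$-factor genuinely oscillates — there is no common modulus producing a local obstruction, which is precisely the phenomenon that would break the argument. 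I would then group the $m$ according to $\gcd(m,q)$, sum the $x_i$-factor bounds as a rapidly convergent geometric series, and — on the ranges where the effective modulus $q/\gcd(mp,q)$ is large compared with the relevant length, so the trivial Gauss bound is wasteful — replace it by the averaged Weyl estimate of Lemma~\ref{lemma:Weyl}, applied after factoring $\gcd(mp,q)$ out of the modulus so that the quadratic part has coprime parameters. The outcome should be a bound of the form $K^{d}(\log q)^{-c(D)}$ with $c(D)\to\infty$, so taking $D$ large finishes the proof.

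The delicate step — and the one I expect to be the real obstacle — is uniformity in $q$. Run naively, the error estimate spawns a sum over divisors of $q$ whose size can be as large as $\sum_{e\mid q}e^{-1/2}$ or the divisor function $\tau(q)$, both $q^{o(1)}$ but \emph{not} $(\log q)^{O(1)}$, so for very composite $q$ the bound above is not achieved. To save it one must exploit the product structure: writing $q=\prod_\ell\ell^{k_\ell}$ and, by the Chinese Remainder Theorem, $\sum_z e(-mpz^2/q)=\prod_\ell\sum_{z\,(\mathrm{mod}\,\ell^{k_\ell})}e(-m_\ell pz^2/\ell^{k_\ell})$, one sees that whenever $m$ has many nonzero CRT-components the Gauss sum beats $q$ by the compounding factor $\prod_{\ell:\,m_\ell\neq 0}(\gcd(m_\ell p,\ell^{k_\ell})/\ell^{k_\ell})^{1/2}$, and this has to be played off against the divisor sum. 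A perhaps cleaner alternative is to first use the ``telescoping g.c.d.'' representation of the linear form — choose an ordering for which $(x_1,\dots,x_d)\mapsto\sum_ia_ix_i$ is a bijection from $\prod_i[0,B_i)$ onto $\Z/q\Z$ with $\prod_iB_i=q$ — to reduce to the case in which a single coefficient coprime to $q$ carries the load, and there argue elementarily: the residues $pz^2\bmod q$ for $0\le z\le\lceil(q/p)^{1/2}\rceil$ are $2(pq)^{1/2}$-dense in $\Z/q\Z$, hence meet every interval, and so every rescaled translate of the one-variable progression, of length exceeding $4(pq)^{1/2}$, which holds once that progression has length $(pq)^{1/2}(\log q)^{D}$.
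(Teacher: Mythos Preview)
Your overall instinct—detect the congruence by characters, Poisson-sum in the short variables, and extract square-root cancellation from the $z$-sum—is the right one, and is what the paper does at its core. But the argument as you have sketched it has a genuine gap, which you yourself flag: for a typical $m$ (say $\gcd(m,q)=1$) the best lower bound available on any $\Norm{ma_i/q}$ is $1/q$, so in the hard regime $K=(pq)^{1/2}(\log q)^{D}\ll q$ none of the Fourier factors $F_i(m)$ beats the trivial bound $O(K)$; all the saving must therefore come from the $z$-sum. Summing the pointwise Gauss bound $|G(m)|\le(2q\gcd(mp,q))^{1/2}$ over $m$ then spawns a factor of size $\sum_{g\mid q}g^{-1/2}$, which for highly composite $q$ is not $(\log q)^{O(1)}$, and neither of your two suggested cures is carried out. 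In particular, the elementary endgame is wrong as stated: the values $pz^{2}$ for $0\le z\le(q/p)^{1/2}$ are indeed $2(pq)^{1/2}$-dense in $\{0,1,\dots,q-1\}$ \emph{with its natural order}, so they meet every \emph{interval} of that length; but the one-variable target $\{ax+r\bmod q:0\le x\le K\}$ is an arithmetic progression of step $a$, not an interval, and multiplication by $\bar a$ destroys the density. The ``hence meet every rescaled translate'' step is a non sequitur, so the last sentence does not prove even the one-variable, coprime-coefficient case.

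The paper closes the gap by a route you did not find. First it reduces to a single variable by an inductive device: decompose $q=q_1\cdots q_l$ into pairwise coprime factors and select $a_{i_j}$ with $(a_{i_j},q_j)=1$ (possible since $(a_1,\dots,a_d,q)=1$); then prove a one-variable \emph{lifting} statement---given $g,h,p,t,z_1$ with $k=(g,h)$, there exist $x\in[0,(ph)^{1/2}(\log h)^{D}]$ and $z_2$ with $gx+pz_1^{2}+tk\equiv pz_2^{2}\pmod h$---from which Lemma~\ref{lemma:main3} follows by induction on $l$. For the one-variable statement one substitutes $z_2=z_1+zk$, clears the common factor $k$, and arrives at $x\equiv \bar a\,pk\,z^{2}+(\text{linear in }z)\pmod{q}$ with $(a,q)=1$; then one Poisson-sums in $x$ and lets $z$ run over a \emph{full} period. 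The crucial point is that the nonzero frequencies are controlled not by pointwise Gauss bounds but by the \emph{averaged} Weyl estimate of Lemma~\ref{lemma:Weyl}: after extracting $s=(pk,q)$ so that the quadratic coefficient is coprime to the reduced modulus $q'=q/s$, that lemma gives $\sum_{0<|m|\le M}\big|\sum_{z}e(\cdot)\big|\ll(M\sqrt{q}+Mq/\sqrt{q'})(\log Mq)^{\alpha}$, with only polylogarithmic loss in $q$. Its proof (via the divisor inequality $\tau(n)\ll_k\sum_{d\mid n,\,d\le n^{1/k}}\tau(d)^{\beta(k)}$ together with averaging over the auxiliary variable) is exactly where the divisor-function obstacle you identified gets absorbed. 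With these two ingredients in place, taking $D$ large makes the error $o(\widehat f(0))$ and the count is positive.
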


The rest of the paper is organized as follows.  The proof of the
combinatorial statement, Lemma \ref{lemma:main1}, comes first in Section \ref{section:lemma1}. We then start the number theoretical part by
giving a proof for Lemma \ref{lemma:Weyl}. The verification of Lemma
\ref{lemma:main3} comes in Section \ref{section:lemma3}.
 After all these preparations, we will be able to establish Lemma \ref{lemma:main2} in Section \ref{section:lemma2}.
 The proof of the main result,  Theorem
\ref{theorem:square:p}, is presented  in Sections \ref{section:dim1} and \ref{section:dim2}.

\section{Proof of Lemma \ref{lemma:main1}}\label{section:lemma1}

\noindent We repeat some arguments from  \cite{SzemVu1} with certain  modifications. The extra information we want to get here (compared with
what have already been done \cite{SzemVu1}) is the fact that the set $A^{''}$ is covered by only few translates of $Q$.

\subsection{An algorithm} Let $A'$ be a
subset of cardinality $|A'|=n^{1/3}(\log n)^{C/3}$ and let
$A'' :=A\backslash A'$. By a simple combinatorial argument  (see \cite[Lemma 7.9]{SzemVu1}), we can find
in $A'$ disjoint subsets $A'_1,\dots,A'_{m_1}$ such that
$|A_i'|\le 20\log_2 |A'|$ and $|l_1^\ast {A_i'}|\ge |A'|/2$ where

\begin{equation}\label{equation:lemma1:l1,m_1}
l_1 \le 10 \log_2 |A'| \mbox{ and } m_1 = |A'|/(40 \log_2 |A'|).
\end{equation}

\noindent (For the definition of $l^*A$ see the beginning of the introduction.)

\noindent Without loss of generality, we can
assume that $m_1$ is a power of 4. Let
$B_1,\dots,B_{m_1}$ be subsets of cardinality $b_1=|A'|/2$ of the
sets $l_1^\ast A_1',\dots, l_1^\ast A_{m_1}'$ respectively. Following \cite[Lemma 7.6]{SzemVu1}), we will run an algorithm with the $B_i$'s as input.
The goal of this algorithm is to produce a GAP which has nice relations with $A^{''}$ (while still not as good as the GAP we wanted in the lemma).  In the next few paragraphs, we are going to describe this algorithm.

\noindent At the first step, set $B_1^1:=B_1,\dots,B_{m_1}^1:=B_{m_1}$ and let
${\mathfrak{B^1}}=\{B_1^1,\dots,B_{m_1}^1\}$. Let $h$ be a large constant to be determined later.

\noindent  At the $(t+1)$-th step, we choose indices $i,j$ and elements
$a_1,\dots,a_h\in A''$ that maximizes the cardinality of $\cup_{d=1}^h (B_i^t+B_j^t+a_d)$ (if there are many choices, choose one arbitrarily). Define ${B^{t+1}_1}'$ to be the union. Delete
from $A''$  the used elements $a_1,\dots,a_h$, and remove from
$\mathfrak{B^t}$ the used sets $B_i^t,B_j^t$. Find the next maximum union $\cup_{k=1}^h
B_i^t+B_j^t+a_k$ with respect to the updated sets $\mathfrak{B^t}$ and
$A''$.

\noindent Assume that we have created $m_{t+1} :=m_t/4$ sets
${B_1^{t+1}}',\dots, {B_{m_{t+1}}^{t+1}}'$. By the algorithm, we have

$$ |{B_1^{t+1}}'| \ge \dots \ge |{B_{m_{t+1}}^{t+1}}'|:= b_{t+1}.$$

\noindent Now for each $1\le i \le m_{t+1}$ we choose a subset $B_i^{t+1}$ of
cardinality exactly $b_{t+1}$ in ${B_i^{t+1}}'$. These $m_{t+1}$ sets (of the same cardinality)  from a collection
$\mathfrak{B^{t+1}}$, which is the output of the $(t+1)$-th step.

\noindent Since $m_{t+1} =m_t/4$,
there are still $m_{t}/2$ unused sets $B_i^t$ left in
$\mathfrak{B^t}$. Without loss of generality, assume that those
are $B_1^{t},\dots,B_{m_t/2}^t$. With a slight abuse of notation, we use $A^{''}$ at every step, although this set loses a few elements each time. (The number of deleted elements is very small compared to the size of $A^{''}$.)

\noindent  Let $l_{t+1} :=2l_t+1$. Observe that

\begin{itemize}

\item $l_t \le 2^t l_1$ (by definition);

\vskip .1in

\item $b_t \le l_t n/p$ (since $\cup_{d=1}^h (B_i^{t-1}+B_j^{t-1}+a_d) \subset
[l_tn/p]$);

\vskip .1in

\item \begin{equation} \label{eqn:BB} |\cup_{d=1}^h B_i^t+B_j^t+a_d|\le b_{t+1}\end{equation} for all $1\le i<j
  \le m_t/2$ and $a_1,\dots,a_h\in A''$ (by the algorithm, as it always chooses a union with maximum size).

\end{itemize}

\noindent Now let $c$ be a large constant and $k$ be the largest index such that  $b_{i}\ge
cb_{i-1}$ for all $i\le k$. Then we have

$$c^k b_1 \le b_k \le l_k n/p.$$

\noindent Since $b_1=|A'|/2$ and $l_k\le 2^k
l_1$, we deduce an upper bound for $k$,

$$k \le \log_{c/2} \frac{l_1n}{b_1 p}.$$

\noindent Next, by the definition of $k$, we have $b_{k+1}\le cb_{k}$. By (\ref{eqn:BB}),
the following holds for all unused sets $B_i^{k},B_j^{k}$ (with
$1\le i \le j\le m_{k}/2$) and for all $a_1,\dots,a_h \in A''$:

$$|\cup_{d=1}^h (B_i^{k}+B_j^{k}+a_d)| \le b_{k+1} \le c b_{k} = c|B_i^{k}|.$$

\noindent In particular

$$|B_1^{k}+B_i^{k}| \le c|B_1^{k}|$$

\noindent holds for all $2\le i \le m_k/2$.

\noindent By  Plunnecke-Ruzsa estimate (see \cite[Corollary 6.28]{TVbook}),  we have

$$|B_1^k+B_1^k|\le c^2 |B_1^k|.$$

\noindent It then follows from Freiman's theorem, Theorem \ref{theorem:Fre}, that there exists a proper GAP $R$ of rank
$O_c(1)$, of size $O_c(1)|B_1^k|$ such that $R$ contains $B_1^{k}$.
Furthermore, by Lemma \ref{lemma:Ruzsa}, $B_i^k$ is contained in $c$
translates of $B_1^k-B_1^k$, thus $B_i^k$ is also contained in $O_c(1)$
translates of $R$.

\noindent Before continuing, we would like to point out that the parameter $h$  has not yet played any role in the arguments. The freedom of choosing $h$ will be important in what follows. We are going to obtain the desired GAP $Q$ (claimed in the lemma) from $R$ by a few additional operations.

\subsection{Creation of many similar GAPs.} One problem with $R$ is that its cardinality can be significantly smaller than the bounds on $Q$ in Lemma \ref{lemma:main1}. We want to obtain larger GAPs by adding many translates of $R$. While we cannot do exactly this, we can do nearly as good by the following argument, which creates many GAPs which are translates of each other and have cardinalities comparable to that of $R$.

By the pigeon hole principle, for $i\le m_k/2$,  we can
 find a set $B_i' \subset B_i^{k}$ with cardinality  $\Theta_c(1)b_{k}$
 which is  contained in one translate of
$R$.

\noindent By \cite[Lemma 5.5]{SzemVu1}, there exists $g=O_{c}(1)$ such that
$B_1'+\dots+B_g'$ contains a proper GAP $Q_1$ of cardinality
$\Theta_c(1)|R|$. Create $Q_2$ by summing $B_{g+1}',\dots, B_{2g}'$, and so on. At the end we
obtain $\frac{m_{k}}{2g} = \Theta_c(1)m_{k}$ such GAPs. Following \cite[Lemma 5.5]{SzemVu1}, we can require the $Q_i$'s to have the properties below

\begin{itemize}

\item rank($Q_i)=$rank($R)=O_{c}(1)$;

\vskip .1in

\item $|Q_i|=\Theta_{c}(1)|R|=\Theta_c(1)b_k$;

\vskip .1in

\item each $Q_i$ is a subset of a translate of $gR$. Thus by Lemma \ref{lemma:Ruzsa}, $R$ is contained in $O_c(1)$
translates of $Q_i-Q_i$;

\vskip .1in

\item the $j$-th size of $Q_i$ is different from $j$-th size of $R$ by
a (multiplicative) factor of order  $\Theta_c(1)$, for all $j$;

\vskip .1in

\item the $j$-th step of $Q_i$ is a multiple of the $j$-th step of $R$ for all $j$;

\end{itemize}

\noindent Thus, by the pigeon hole principle and truncation (if necessary) we can obtain $m'=\Theta_c(m_{k})$
GAPs, say, $Q_1,\dots, Q_{m'}$, which
are translate of each other.  An important remark here is that since the $Q_i$ are obtained from summing different $B$'s, the sum
$Q_1+ \dots+ Q_{m'}$ is a subset of $S_{A'}$. The desired GAP $Q$ will be a subset of this sum.

\subsection{Embedding  $A''$} In this step, we embed $A^{''}$ in a union of few translates of a GAP $Q_1$ of constant rank.

We set the (so far untouched) parameter $h$ to be sufficiently large so that

$$ \Theta_c(1)=h  >  c|B_1^{k}|/|B_1'|.$$

\noindent Let $d$ be the largest number such that there are $d$ elements
$a_1,\dots,a_d$ of $A''$ for which the sets  $B_1'+B_2'+a_i$ are
disjoint. Assume for the moment that $d\ge h$, then we would have

$$|\cup_{i=1}^h (B_1'+B_2'+a_i)= h|B_1'+B_2'| \ge h|B_1'| > c |B_1^{k}|$$

\noindent However, this is impossible because $\cup_{i=1}^h
(B_1'+B_2'+a_i)\subset \cup_{i=1}^h (B_1^k+B_2^k+a_i)$ and the
latter has cardinality less than $c|B_1^k|$ by definition. Thus we have $d <
h$. So $d=O_c(1)$.

\noindent  Let us fix $d$ elements $a_1,\dots,a_d$ from $A''$ which attained the disjointness in the definition of $d$. By the maximality of $d$, for any $a\in A''$ there exists
$a_i$ so that $(B_1'+B_2'+a) \cap (B_1'+B_2'+a_i) \neq \emptyset$.
Hence

$$a-a_i \in B^k_1+B^k_2-(B^k_1+B^k_2)=(B^k_1-B^k_1)+(B^k_2-B^k_2)\subset
2R-2R.$$

\noindent Thus $A''$ is covered by at most $d=O_c(1)$ translates
of $2R-2R$. On the other hand, since $R$ is contained in
$O_c(1)$ translates of $Q_1-Q_1$, $2R-2R$ is contained in
$O_c(1)$ translates of $4Q_1-4Q_1$. It follows that
that $A''$ is covered by $O_c(1)$ translates of $Q_1$.

The remaining problem here is that $Q_1$ does not yet have the required rank and cardinality. We will obtain these by adding the $Q_i$ together (recall that these GAPs are translates of each other) and using a rank reduction argument, following \cite{SzemVu1} (see also \cite[Chapter12]{TVbook}).

\subsection{Rank reduction}
Let $P$ be the homogenous translate of $Q_1$( and also  of
$Q_2,\dots,Q_{m'})$. Recall that

$$|P|=|Q_1| =\Theta_c(b_k)=\Omega_c({c^kb_1}).$$

\noindent and also $$m'=\Theta_c(m_k)=\Theta_c(\frac{b_1}{4^k}),
\mbox{ and } l_{k+1}\le 2^{k+1}l_1 .$$

\noindent Set $l: =\min\{m', |A'|/2l_{k+1}\}$. Recall that
 $|A'| = n^{1/3} (\log n)^{C/2}$, $l_1 \le 10 \log_2 |A'|$ and $b_1 =|A'|/2$. By choosing $c$ and $C$ sufficiently large, we can guarantee that

\begin{equation}\label{equation:lemma1:largeenergy}
l|P|\ge n^{2/3}(\log n)^{C/2} \,\,\, ; l^2|P|\ge n (\log n)^{2C/3}.
\end{equation}

\noindent and also

\begin{equation}\label{equation:lemma1:toolargeenergy}
l^3|P|\ge n^{4/3}(\log n)^C
\end{equation}

\noindent Now we invoke Lemma
\ref{lemma:GreenTao} to find a large GAP in $lP$. Assume, without loss of generality, that $l=2^s$ for some integer $s$.
We start with $P_0 :=P$ and $\ell_0 :=l$. If $2^sP_0$ is  proper,
then we stop. If not, then there exists a smallest index $i_1$ such that
$2^{i_1}P_0$ is proper but $2^{i_1+1}P_0$ is not.

\noindent By Lemma \ref{lemma:GreenTao} (applying to $2^{i_1}P_0$; see also \cite[Lemma
4.2]{SzemVu1}) we can find a  GAP $S$ which
contains a $\Theta_c(1)$ portion  of $2^{i_1}P_0$ such that
$rank(S)< r :=rank (2^{i_1} P_0)$. We denote by $P'$ the intersection of
$S$ and $2^{i_1} P_0$.

\noindent By \cite[Lemma 5.5]{SzemVu1}, there is a constant
$g=\Theta_c(1)$ such that the set $2^gP'$ contains a proper GAP $P_{1}$ of
rank equals $rank S$ and  cardinality $\Theta_c(1)|2^{i_1}P_0|$.
 Set $\ell_1:=\ell_0/2^{i_1+g}$ if $\ell_0/2^{i_1+g}\ge 1$ and proceed with $P_1, \ell_1$ and so on. Otherwise we  stop.

\noindent Observe that if $2^{i_j}P_j$ is proper, then
$|2^{i_j}P_j|=(1+o(1))2^{i_j r_j}|P_j|$, where $r_j$ is the rank of $P_{j}$.

\noindent As the rank of $P_0$ is $O_c(1)$, and $r_{j+1}\le r_j-1$, we  must stop after $\Theta_c(1)$
steps. Let $Q'$ be the  proper GAP $Q'$ obtained when we stop. It has rank $d'$, for some integer $d' < r$ and cardinality at
least $\Theta_c(1)\ell_0^{d'}|P_0|=\Theta_c(1)l^{d'}|P|$. On the other
hand, since a translate of $lP$ is contained in $S_{A'}$, $|Q'|\le
|A'|n/p \le |A'|n $, that is $\Theta_c(1)l^{d'}|P|\le |A'|n$. Because of
(\ref{equation:lemma1:toolargeenergy}), this holds only if $d'\le 2$.

\subsection{Properties of $Q$.}
 We showed that $A''$ is contained in $\Theta_c(1)$ translates of
$Q_1$, thus it is contained in $\Theta_c(1)$ translates of $2^{i_1}P_0$.

\noindent By Lemma \ref{lemma:GreenTao}, $2^{i_1}P_0$ is covered
by $\Theta_c(1)$ translates of $S$. It follows that
$A''$ is  contained in $\Theta_c(1)$ translates of $S$. On the other hand, by Lemma
\ref{lemma:Ruzsa}, $S$ is contained in $\Theta_c(1)$ translates of
$P'-P'$. Thus $A''$ is contained in $\Theta_c(1)$ translates of
$P'-P'$, and hence in $\Theta_c(1)$ translates of $(2^gP')-(2^gP')$
as well. Furthermore, by Lemma \ref{lemma:Ruzsa}, $2^gP'$ is covered
by $O_c(1)$ translates of $P_1$ (more precisely, $P_1-P_1$), thus we conclude that $A''$ is covered by
$\Theta_c(1)$ translates of $P_1$. Because we stop after
$\Theta_c(1)$ steps, a similar relation is also valid between $A''$
and any $P_j$. Thus, $A^{''}$ is covered by $O_c(1)$ translates of $Q'$.

\noindent Furthermore,  $Q'$ is a subset of $lP$. Thus a translate $Q$ of  $Q'$ lies in $Q_1+\dots+Q_{m'}\subset S_{A'}$. This $Q$ has rank
$1\le d'\le 2$ and cardinality $|Q| =|Q'| \ge \Theta(1)l^{d'}|A'|$.
(The right hand side satisfies the lower bounds claimed in Lemma \
\ref{lemma:main1}, thanks to (\ref{equation:lemma1:largeenergy}).)
This is the GAP claimed in Lemma \ref{lemma:main1} and our proof is complete.

\section{Proof of Lemma \ref{lemma:Weyl}}\label{section:lemma:Weyl}

If $q$ is a prime, the lemma is a corollary of the well known Weyl's estimate
(see  \cite{IKbook}. We need to add a few arguments to handle the general case. The following lemma will be useful.

\begin{lemma}\label{lemma:divisor}
Let $\tau(n)$ be the number of positive divisors of $n$.
For any given $k\ge 3$ there exists a positive
constant $\beta(k)$ such that the following
holds for every $n$.

$$\tau(n)= O_k( \sum_{\scriptstyle d|n \atop \scriptstyle d\le
n^{1/k}}\tau(d)^{\beta(k)}).$$

\end{lemma}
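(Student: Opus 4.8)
The plan is to reduce the inequality to the existence of a \emph{single} good divisor: it suffices to produce one $d\mid n$ with $d\le n^{1/k}$ and $\tau(d)^{\beta(k)}\ge \tau(n)/C(k)$, since that $d$ is one of the terms in the sum on the right. To build such a $d$ I would split $n$ according to the sizes of its prime exponents: write $n=n_1n_2$ with $n_1:=\prod_{p\,:\,v_p(n)\ge k}p^{v_p(n)}$ and $n_2:=n/n_1=\prod_{p\,:\,1\le v_p(n)<k}p^{v_p(n)}$. These are coprime, so $\tau(n)=\tau(n_1)\tau(n_2)$, and I will choose $d_i\mid n_i$ with $d_i\le n_i^{1/k}$ and set $d:=d_1d_2$, which then divides $n$, satisfies $d\le n_1^{1/k}n_2^{1/k}=n^{1/k}$, and has $\tau(d)=\tau(d_1)\tau(d_2)$.

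For $n_1$ take $d_1:=\prod_{p\mid n_1}p^{\floor{v_p(n)/k}}$; then $d_1^{\,k}\mid n_1$, so $d_1\le n_1^{1/k}$. Writing $a=v_p(n)\ge k$ and $b=\floor{a/k}\ge 1$ one has $a+1\le k(b+1)$, hence $\tau(n_1)\le k^{\omega(n_1)}\tau(d_1)$; and since every factor $b+1$ of $\tau(d_1)$ is $\ge 2$ we get $\omega(n_1)\le\log_2\tau(d_1)$, so $k^{\omega(n_1)}\le\tau(d_1)^{\log_2 k}$ and therefore $\tau(n_1)\le\tau(d_1)^{1+\log_2 k}$. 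For $n_2$ the exponents are $<k$, so $\tau(n_2)\le k^{\omega(n_2)}$. Let $q_1<\dots<q_r$ be the primes dividing $n_2$, put $s:=\floor{r/(2k)}$, and take $d_2:=q_1\cdots q_s$ (with $d_2:=1$ when $s=0$). Because the $q_i$ increase and $sk\le r/2$, the elementary bound $(q_1\cdots q_s)^k\le q_1\cdots q_r\le n_2$ holds, so $d_2\le n_2^{1/k}$; meanwhile $\tau(d_2)=2^s\ge 2^{\,r/(2k)-1}$, which gives $\tau(n_2)\le k^r\le\bigl(2\tau(d_2)\bigr)^{2k\log_2 k}$.

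Putting $\beta(k):=\lceil 2k\log_2 k\rceil$ and $C(k):=2^{2k\log_2 k}=k^{2k}$, and using $\tau(d_i)\ge 1$ to absorb the smaller exponent $1+\log_2 k$ into $\beta(k)$, the two estimates combine to $\tau(n)=\tau(n_1)\tau(n_2)\le C(k)\,\tau(d_1)^{\beta(k)}\tau(d_2)^{\beta(k)}=C(k)\,\tau(d)^{\beta(k)}$. Since $d\mid n$ and $d\le n^{1/k}$, the sum on the right of the lemma is at least $\tau(d)^{\beta(k)}\ge\tau(n)/C(k)$, which is precisely the claimed bound with implied constant $C(k)$.

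The one step that needs care is the treatment of $n_2$: the threshold $s$ must be of order $r/k$ so that $d_2\le n_2^{1/k}$, yet then $\tau(d_2)=2^s$ is only about a $\beta(k)$-th root of the worst case $\tau(n_2)\le k^r$ exactly when $\beta(k)$ is allowed to grow like $k\log k$ — this forces the exponent to depend on $k$. The size inequality $(q_1\cdots q_s)^k\le q_1\cdots q_r$ is the crux there; it follows purely from $q_i\le q_s<q_j$ for $i\le s<j$ together with the counting inequality $sk\le r$, so no analytic input (such as prime number theorem bounds on $\prod_{p\le x}p$) is needed.
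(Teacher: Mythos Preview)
Your proof is correct and follows essentially the same strategy as the paper's: both construct a single divisor $d\le n^{1/k}$ by splitting the primes of $n$ according to whether their exponent is $\ge k$ or $<k$, taking $p^{\lfloor v_p(n)/k\rfloor}$ for the former and a product of the smallest primes for the latter, and then bound $\tau(n)$ by $C(k)\tau(d)^{\beta(k)}$. The only cosmetic difference is that the paper takes $s=\lfloor v/k\rfloor$ small primes (yielding $\beta(k)=k\log(k+1)$) while you take $s=\lfloor r/(2k)\rfloor$, which is slightly wasteful and gives the marginally larger $\beta(k)=\lceil 2k\log_2 k\rceil$; your write-up also spells out the size inequality $(q_1\cdots q_s)^k\le q_1\cdots q_r$ that the paper leaves implicit.
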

\begin{proof}(Proof of Lemma \ref{lemma:divisor}). We can set
 $\beta(k)=k\log (k+1)$. We factorize $n$ in the following specific way

$$n=\prod_{i=1}^u p_i^{a_i}\prod_{j=1}^vq_j^{b_j}$$

where $p_1\le\dots \le p_u,\,\, q_1\le \dots \le q_v$ are primes and $a_i\ge k> b_j\ge 1$. Set

$$d:=\prod_{i=1}^u p_i^{\lfloor \frac{a_i}{k}\rfloor}\prod_{j\le
\lfloor \frac{v}{k}\rfloor}q_j.$$

\noindent Then $d\le n^{1/k}$ by definition and

$$(k+1)^k \tau(d)^{\beta(k)}=(k+1)^k 2^{\lfloor
\frac{v}{k}\rfloor k\log (k+1)}\prod_{i=1}^u(\lfloor
\frac{a_i}{k}\rfloor+1)^{k\log (k+1)} \ge (k+1)^{v}\prod_{i=1}^u(1+a_i)\ge \tau(n),$$ completing the proof.\end{proof}

\noindent Now we start the proof of Lemma \ref{lemma:Weyl}.
Let $S:= \sum_{\scriptstyle |m| \le M \atop \scriptstyle m\neq 0}|\sum_{z\in I}e(\frac{amz^2}{q}+\theta m
z)|$. Following Weyl's argument, we use
 Cauchy-Schwarz and the triangle inequality to obtain

$$S^2 \le 2M\sum_{{\scriptstyle |m| \le M \atop \scriptstyle m\neq 0}} \sum_{z_1,z_2\in I}
e(\frac{am(z_1-z_2)(z_1+z_2)}{q}+\theta m(z_1-z_2)).
$$

\noindent For convenience, we
 change the  variables, setting  $u:= z_1-z_2,v:= z_2$, then

$$S^2 \le 2M\sum_{{\scriptstyle |m| \le M \atop \scriptstyle m\neq 0}} \sum_{|u|\le N}e(\frac{amu^2}{q}+\theta mu)\sum_{v\in I, v\in I-u}
e(\frac{2amuv}{q})$$

$$\le 2M \sum_{{\scriptstyle |m| \le M \atop \scriptstyle m\neq 0}} \sum_{|u|\le N}|\sum_{v\in I, v\in I-u}
e(\frac{2amuv}{q})|.$$

\noindent Next, using the basic estimate (see \cite[Section 8.2]{IKbook}, for instance)

$$| \sum_{K_0< k \le K_0+K}e(\omega k) |\le \min(K,\frac{1}{\|2\omega \|})$$

\noindent we obtain that

$$S^2 \le 2M \sum_{{\scriptstyle |m| \le M \atop \scriptstyle m\neq 0}} \sum_{|u|\le N}\min (N,
\frac{1}{\|2amu/q\|}).$$

\noindent To estimate the right hand side, let $N_r$ be the number of pairs $(m,u)$ such that $2amu \equiv
r(\bmod{q})$. (In what follows, it is useful to keep in mind that $a$ and $q$ are co-primes.) We have

\begin{equation}\label{equation:lemma:Weyl:1}
S(M,N,q)^2 \le 2M \left (N_0N + \sum_{1\le r \le
q/2}(N_r+N_{q-r})\frac{q}{r}\right ).
\end{equation}

To finish the proof, we are going to derive a (uniform) bound for the $N_r$'s. For $0\le r \le q-1$ let $0\le r_a\le q-1$ be the only
number such that $ar_a \equiv r(\bmod {q})$. Thus $2amu \equiv
r(\bmod{q})$ is equivalent with $2mu \equiv r_a(\bmod {q})$.

\noindent First we consider the case $r\neq 0$, thus $r_a\neq 0$. Write $2mu=r_a+sq$. It
is clear that $r_a+sq\neq 0$ for all $s$. Since $2mu\le 2MN$, we have $|s|\le 2MN/q$. For each
given $s$ the number of such pairs $(m,u)$ is bounded by
$\tau(r_a+sq)$.

\noindent Choose $k=\max(\frac{1}{\ep}+2,3)$, then $MN/q\ge (MN)^{2/k}$ by the assumption $MN\ge q^{1+\ep}$. It follows from
Lemma \ref{lemma:divisor} that, for $r\neq 0$,

\begin{align*} N_r \le \sum_{|s|\le 2MN/q}\tau(r_a+sq)
 &= O_{\ep} (\sum_{d\le (MN)^{1/k}}\tau(d)^{\beta(k)}(\sum_{\scriptstyle |s|\le
4MN/q \atop \scriptstyle d|r_a+sq}1)\\
& =O_{\ep} ( \sum_{d\le (MN)^{1/k}}\tau(d)^{\beta(k)}(\frac{4MN}{qd} +O(1))) \\
&= O_{\ep} (\frac{MN}{q} \sum_{d\le (MN)^{1/k}} \frac{\tau(d)^{\beta(k)}}{d} + O((MN)^{1/k+o(1)})) \\
&= O_{\ep }(\frac{MN}{q} \sum_{d\le (MN)^{1/k}} \frac{\tau(d)^{\beta(k)}}{d}). \end{align*}

\noindent Notice that $\sum_{d\le x}\tau(d)^{\beta(k)}\ll
x\log^{\beta'(k)} x$ for some positive constant $\beta'(k)$ depending
on $\beta(k)$ (see \cite[Section 1.6]{IKbook}, for instance). By summation by parts we deduce that

$$N_r =O_{\ep}( \frac{MN}{q} \log^{\beta''(k)} (MN))$$

\noindent for some positive constant $\beta''(k)$ depending on $\beta'(k)$.

\noindent Now we consider the case $r=0$. The equation $2mu=sq$ has at most $\tau(sq)$
solution pairs $(m,u)$, except when $s=0$, the case that has $2M$
solutions $\{(m,0);|m|\le 2M, m\neq 0\}$. Thus we have

$$N_0 \le 2M+  \sum_{|s|\le 2MN/q,s\neq 0}\tau(sq),$$

\noindent and hence,

$$N_0 =  O_{\ep}( 2M+\frac{MN}{q} \log^{\beta''(k)} (MN)).$$

\noindent Combining  these estimates with
(\ref{equation:lemma:Weyl:1}), we can conclude that

$$S(M,N,q) \ll_{\ep}
(M\sqrt{N}+MN/\sqrt{q})\log^{\alpha}(MN)$$

\noindent for some sufficiently large constant $\alpha=\alpha(\ep)$.

\section{Proof of Lemma \ref{lemma:main3}}\label{section:lemma3}

We are going to need the following simple fact.

\begin{fact}\label{fact:1} Let $a_1,\dots,a_m,q$ be integers
such that $(a_1,\dots,a_m,q)=1$. Then we can select a decomposition
$q=q_1\dots q_l$ of $q$ and $l$ different numbers $a_{i_1},\dots,a_{i_l}$ of $\{a_1,\dots,a_m\}$ (for some $l \ge 1$) such that

$$(q_i,q_j)=1
\mbox{ for evey $i \neq j$ and } (a_{i_j},q_j)=1 \mbox{ for every $j$}.$$
\end{fact}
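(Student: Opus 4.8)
The plan is to organize the prime factorization of $q$ prime by prime. We may assume $q$ is a positive integer (only $|q|$ matters for the gcd conditions, and a sign can be absorbed into a single factor), and if $q=1$ we simply take $l=1$, $q_1=1$ and $a_{i_1}=a_1$, so assume $q>1$. Write $q=\prod_{t=1}^{s} p_t^{e_t}$ with $p_1,\dots,p_s$ distinct primes and each $e_t\ge 1$. For each $t$, the hypothesis $(a_1,\dots,a_m,q)=1$ forces $p_t\nmid a_i$ for at least one $i$; fix a choice $\phi(t)\in\{1,\dots,m\}$ with $p_t\nmid a_{\phi(t)}$.

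Next I would bundle the prime powers of $q$ by the index selected for them. Let $i_1<\dots<i_l$ be the distinct values taken by the map $\phi$, and for $1\le j\le l$ set
$$ q_j := \prod_{t \,:\, \phi(t)=i_j} p_t^{e_t}. $$
Three verifications then finish the proof. First, every prime power $p_t^{e_t}$ occurring in $q$ lands in exactly one $q_j$ (namely the one with $i_j=\phi(t)$), so $q_1\cdots q_l=q$. Second, distinct $q_j$ involve disjoint sets of primes, hence $(q_i,q_j)=1$ for $i\neq j$. Third, if $p_t\mid q_j$ then $\phi(t)=i_j$, so by construction $p_t\nmid a_{i_j}$; as this holds for every prime dividing $q_j$, we get $(a_{i_j},q_j)=1$. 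Since $i_1,\dots,i_l$ are distinct indices, the elements $a_{i_1},\dots,a_{i_l}$ are $l$ different members of the list, and $l\ge 1$ because $s\ge 1$.

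I do not expect any genuine obstacle here: the statement is a pigeonhole-type repackaging of the primes dividing $q$, and the only points needing a word of care are the degenerate case $q=1$ (dispatched by hand above, together with the tacit assumption $m\ge 1$) and the fact that the choice function $\phi$ is not canonical — which is harmless, since the lemma only asserts existence.
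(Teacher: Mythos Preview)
Your proof is correct and follows essentially the same approach as the paper: decompose $q$ into prime powers, assign to each prime power an $a_i$ coprime to it, then group the prime powers according to which $a_i$ they were assigned and take the corresponding products. Your write-up is in fact a bit more careful than the paper's (you handle the degenerate case $q=1$ and spell out the verifications), but the underlying idea is identical.
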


\begin{proof}(of Fact \ref{fact:1})
Let $q=q_1'\dots q_k'$ be the decomposition of $q$ into prime
powers. For each $q_i'$ we assign a number $a_i'$ from $\{a_1, \dots, a_m \}$ such that $(q_i',a_i')=1$ (the same $a_i$ may be assigned to many $q_j'$). Let $a_{i_j}$'s be the collection of the $a_i'$'s
without multiplicity. Set  $q_{j}$ to be the product of all
$q_i'$  assigned to $a_{i_j}$.
\end{proof}

\noindent The core of the proof
of Lemma \ref{lemma:main3} will be the following proposition, which is basically the  case of one variable in a slightly more general setting.

\begin{proposition}\label{proposition:square:local:1}
There is a constants $D$ such that the following holds.
For given integers $g,h,p,t,z_1; g,h,p >0$ there exist integers $x\in [0,(ph)^{1/2} (\log h)^{D}]$ and $z_2$ such that $gx+pz_1^2+tk \equiv pz_2^2 (\bmod h)$,
where $k=(g,h)$ .
\end{proposition}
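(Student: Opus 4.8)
The plan is to reduce the statement to an application of the Weyl-type estimate, Lemma \ref{lemma:Weyl}, by counting solutions via Fourier analysis (Poisson summation) and showing the main term dominates. First I would set up notation: write $k = (g,h)$, so $g = k g'$ with $(g', h/k) = 1$; the congruence $gx + pz_1^2 + tk \equiv pz_2^2 \pmod h$ is to be solved in $x \in [0, H]$ with $H := (ph)^{1/2}(\log h)^{D}$ and some integer $z_2$, which we may take in a residue system mod $h$, say $z_2 \in [0, h)$. I would introduce the counting function
\[
\CN := \sum_{x=0}^{H} \sum_{z_2 = 0}^{h-1} \mathbf{1}\bigl[ gx + pz_1^2 + tk \equiv pz_2^2 \pmod h \bigr],
\]
and the goal is to prove $\CN > 0$ for a suitable absolute constant $D$. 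Writing the indicator as $\frac{1}{h}\sum_{m=0}^{h-1} e\!\left(\frac{m(gx + pz_1^2 + tk - pz_2^2)}{h}\right)$, separate the $m=0$ term, which contributes the main term $\frac{(H+1)h}{h} = H+1$, from the error
\[
\CE := \frac{1}{h} \sum_{\scriptstyle 0 \le m \le h-1 \atop \scriptstyle m \neq 0} e\!\left(\tfrac{m(pz_1^2 + tk)}{h}\right) \Bigl( \sum_{x=0}^{H} e\!\left(\tfrac{mgx}{h}\right) \Bigr) \Bigl( \sum_{z_2=0}^{h-1} e\!\left(\tfrac{-mpz_2^2}{h}\right) \Bigr).
\]
To make the proposition follow, it suffices to bound $|\CE| < H+1$.

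The key step is estimating $\CE$. Here the factor $\sum_{z_2} e(-mpz_2^2/h)$ is a Gauss-type sum, and the factor over $x$ is a geometric sum of length $H+1 \asymp (ph)^{1/2}$; the third factor has modulus $1$. The natural move is to split the $m$-sum according to $d := (m, h/k)$ — note only $m$ divisible by $k/(g,m)$-type factors survive the $x$-sum being large — and then, after collecting the common divisor between $m$ and $h$, rewrite the remaining $x$-sum and $z_2$-sum so that Lemma \ref{lemma:Weyl} (with $\theta$ chosen to absorb the linear term coming from $gx$, the modulus taken to be $h/d$ or its square-free-adjusted version, $N \asymp H$, $M \asymp h/d$) applies and yields a bound of the shape $c\,(M\sqrt N + MN/\sqrt{h/d})(\log)^{\alpha}$. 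Since $MN \asymp (h/d)\cdot (ph)^{1/2}$ and we need this to exceed $(h/d)^{1+\eps}$, the constraint $MN \ge q^{1+\eps}$ of Lemma \ref{lemma:Weyl} is met once $H = (ph)^{1/2}(\log h)^D$ with $D$ large. Assembling the pieces, one gets $|\CE| \ll (p h)^{1/2} (\log h)^{\alpha'} / (\log h)^{D} \cdot (\log h)^{O(1)}$ — more precisely the gain of $(\log h)^{D}$ from the length of the $x$-summation (relative to the threshold) beats the accumulated $(\log h)^{O(1)}$ losses — so choosing $D$ larger than the implied exponent $\alpha' + O(1)$ forces $|\CE| < H+1$, completing the proof.

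I would use Lemma \ref{lemma:Weyl} with the role of the interval $I$ played by $\{0, 1, \dots, H\}$ (or a translate), the coefficient $a$ taken coprime to the effective modulus after dividing out $(m p, h)$, and $\theta$ set to $g/h$ times the appropriate unit, so that the mixed sum $\sum_z e(amz^2/q + \theta m z)$ exactly matches $\sum_{z_2} e(-mpz_2^2/h) \sum_x e(mgx/h)$ combined — in fact it is cleaner to keep the $z_2$-sum and $x$-sum separate, bound $|\sum_x e(mgx/h)| \le \min(H+1, \|mg/h\|^{-1})$, and then apply the divisor-bound machinery of Section \ref{section:lemma:Weyl} directly, which is exactly the engine inside Lemma \ref{lemma:Weyl}. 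The main obstacle I anticipate is bookkeeping the common divisors: the modulus $h$ need not be square-free, $p$ and $t$ may share factors with $h$, and $(g,h) = k$ means the $x$-sum is supported on $m$ in an arithmetic progression mod $h/k$; carefully tracking how these divisors interact — so that the final exponent of $\log h$ is an absolute constant independent of all parameters — is where the real work lies. Everything else is a routine Poisson-summation-plus-Weyl argument of the type already carried out for Lemma \ref{lemma:Weyl}.
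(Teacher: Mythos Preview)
Your overall framework---count solutions, expand via additive characters, separate the $m=0$ main term, bound the rest by a Weyl-type estimate---matches the paper's, but you are missing the key opening move, and without it the argument you sketch does not close.

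The paper begins by writing $g=ka$, $h=kq$ with $(a,q)=1$ and then \emph{substitutes} $z_2=z_1+zk$. This forces $pz_2^2\equiv pz_1^2\pmod k$ automatically, and after dividing the congruence through by $k$ one is left with
\[
x\equiv \bar a\,(pk\,z^2+2pz_1 z - t)\pmod q,\qquad (a,q)=1.
\]
Now the problem is clean: for each $z\in[1,q]$ one counts $x$ in a short interval lying in a prescribed residue class mod $q$, Poisson-sums in $x$ (with the smooth cutoff of Lemma~\ref{lemma:Fourier}, not a sharp indicator), and the error becomes $\sum_{m\neq0}\hat f(m/q)\sum_z e\bigl((\bar a pk\,z^2+2\bar a pz_1 z)m/q\bigr)$---precisely the shape of Lemma~\ref{lemma:Weyl}, with $z$ the quadratic variable and $I=[1,q]$.

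In your setup you keep the full modulus $h$ and let $z_2$ range over $[0,h)$. Then for every $m$ with $q\mid m$ (there are $k-1$ nonzero such $m$) one has $\|mg/h\|=0$, so your $x$-sum equals $H+1$; the accompanying Gauss sum $\sum_{z_2\bmod h}e(-mpz_2^2/h)$ has magnitude $q\sqrt{(jp,k)\,k}$ when $m=jq$, and summing the triangle-inequality bounds over $j=1,\dots,k-1$ already yields something of order $(H+1)\sqrt k$, larger than your main term $H+1$ whenever $k>1$. You correctly flag this as your ``main obstacle'' but offer no resolution. (The \emph{signed} contribution of those terms is in fact $(H+1)(N_k-1)\ge0$, with $N_k=\#\{z_2\bmod k:pz_2^2\equiv pz_1^2\}$, so one could absorb them into the main term---but that is a different argument from the one you outline, and still leaves the remaining $m$'s to be handled.) Your proposed invocation of Lemma~\ref{lemma:Weyl} is also mis-specified: you take $I=\{0,\dots,H\}$, but $x$ enters the phase \emph{linearly}; the quadratic variable is $z_2$, whose range has length $h$, not $H$. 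The substitution $z_2=z_1+zk$ is exactly what collapses the modulus to $q$, removes the bad frequencies $m\equiv0\pmod q$, and puts the exponential sum into the form Lemma~\ref{lemma:Weyl} requires.
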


\noindent Lemma \ref{lemma:main3} follows from Fact \ref{fact:1} and Proposition
\ref{proposition:square:local:1} by an inductive argument. Indeed, by
the above fact we may assume that $q=q_1\dots q_l$ where
$(a_i,q_i)=1$, and so

$$(a_l,q)|q_1\dots q_{l-1}.$$

\noindent Now if Lemma \ref{lemma:main3} is
true for $l-1$ variables, i.e. there are appropriate
$x_1,\dots,x_{l-1}$ such that $a_1x_1+\dots a_{l-1}x_{l-1} + r =
pz_1^2+tq_1\dots q_{l-1}$. Then we apply Proposition
\ref{proposition:square:local:1} for $q=h,g=a_l$ to find $x_l$. It thus
remains to justify Proposition \ref{proposition:square:local:1}.

\begin{proof}(of Proposition \ref{proposition:square:local:1})
Without loss of generality we assume that $h\ge 3$. As $k=(g,h)$, we can write $g=ka,h=kq$ where $(a,q)=1$. We shall find a solution in the form $z_2=z_1+zk$. Plugging in $z_2$ in this form and simplifying by $k$, we end up with the equation

$$ax+t \equiv pkz^2 +2pz_1z(\bmod{q}).$$ or equivalently,

\begin{equation}\label{equation:lemma3:Proposition:1}
x \equiv \bar{a}pkz^2 +2\bar{a}pz_1z-\bar{a}t (\bmod q) \end{equation}

 \noindent where $\bar{a}$ is the reciprocal of $a$ modulo $q$,
$a\bar{a}\equiv 1 (\bmod q)$.

\noindent Our task is to find $x\in[0,(ph)^{1/2}(\log h)^D]$ such that
(\ref{equation:lemma3:Proposition:1}) holds for some integer
$z$. Notice that if $q$ is small and $D$ is large then
$(ph)^{1/2}(\log h)^D \ge (\log 3)^D$, therefore the interval
$[0,(ph)^{1/2}]$ contains every residue class modulo $q$; as a result,
(\ref{equation:lemma3:Proposition:1}) holds trivially. From now on we
can assume that $q$ is large,

\begin{equation}\label{assumeq}
q\ge \exp\big(16(6(\alpha+1)/e)^{\alpha+1}\big)
\end{equation}

where $c,\alpha$ are constants arising from  Lemma \ref{lemma:Weyl}
with $\ep = 1/3$.

\noindent Let $s=(pk, q)$; so we can write $pk=sp',q=sq'$ with
$(p',q')=1$.

\noindent Let $D$ be a large constant (to be determined later) and set

$$L:=(sq)^{1/2}(\log q)^{D}/2 \mbox{ and } I:=[L,2L].$$

 Note that

$$ph=pkq=sp'q\ge sq.$$

\noindent Thus we have

$$I\subset [0,(ph)^{1/2}(\log h)^{D}].$$

\noindent Let $f$ be a smooth function defined with respect to the interval $I$ (as in Lemma
\ref{lemma:Fourier}). For fixed $z\in [1,q]$ the numbers of $x$ in
$[0,(sq)^{1/2}\log^D q]$ satisfying
(\ref{equation:lemma3:Proposition:1}) is at least

$$N_z :=\sum_{m\in \Z}f(\bar{a}pkz^2 +2\bar{a}pz_1z-\bar{a}t+mq).$$

By Poisson summation formula (\ref{PSF})

$$N_z = \sum_{m\in \Z} \frac{1}{q}\widehat{f}(\frac{m}{q}) e(\frac{(\bar{a}pkz^2 +2\bar{a}pz_1z-\bar{a}t)m}{q}).$$

\noindent By summing over $z \in [1,q]$ we obtain

$$N:=\sum_{z=1}^q N_z = \frac{1}{q}\sum_{m\in \Z}\widehat{f}(\frac{m}{q})\sum_{z=1}^{q}e(\frac{(\bar{a}pkz^2 +2\bar{a}pz_1z-\bar{a}t)m}{q}).$$

\noindent To conclude the proof, it suffices to show that $N >0$.
We are going to show (as fairly standard in this area) that the sum is dominated by the contribution of the zero term.

By the triangle inequality, we have

$$|N-\widehat{f}(0)| \le \frac{1}{q}\sum_{m\in \Z,m\neq 0}|\widehat{f}(\frac{m}{q})||\sum_{z=1}^{q}e(\frac{(\bar{a}pkz^2 +2\bar{a}pz_1z)m}{q})|.$$

Let $\gamma_1,\gamma_2$ be a sufficiently large constant and let

$$L':= \frac{\gamma_1 q (\log q)^{\gamma_2} } {L} . $$

\noindent Set

$$S_1 :=\frac{1}{q}\sum_{|m|\ge L'}|\widehat{f}(\frac{m}{q})||\sum_{z=1}^{q}e(\frac{(\bar{a}pkz^2 +2\bar{a}pz_1z)m}{q})|$$

and

$$S_2 :=\frac{1}{q}\sum_{\scriptstyle |m|\le L' \atop \scriptstyle m\neq 0}|\widehat{f}(\frac{m}{q})||\sum_{z=1}^{q}e(\frac{(\bar{a}pkz^2 +2\bar{a}pz_1z)m}{q})|.$$

\noindent We then have

$$|N-\widehat{f}(0)| \le S_1+S_2.$$

In what follows, we show that both $S_1$ and $S_2$ are less than
$\widehat{f}(0)/4$.

{\it \noindent Estimate for $S_1$.} It is not hard to show that

$$\sum_{k\in \Z} \exp(-\sqrt {x|k|})<\frac{5}{x} \mbox{ for $0<x<
1$}.$$

To see this, observe that

$$\sum_{k\ge 1} \exp(-\sqrt{xk})\le
\int_{0}^{\infty}\exp(-\sqrt{xt})dt=\frac{2}{x},$$

\noindent where the integral is evaluated by
changing variable and integration by parts.

\noindent Thus

\begin{equation}\label{equation:lemma3:decay}
\sum_{|k|\ge k_0} \exp(-\sqrt {x|k|})< \sum_{k\in \Z}
\exp(-\sqrt x(\frac{ \sqrt {|k|} +\sqrt {k_0}}{2})) \le
\frac{20}{x}\exp(-\frac{\sqrt{xk_0}}{2}).
\end{equation}

\noindent From the property of $f$ (Lemma \ref{lemma:Fourier}) we can deduce that

$$S_1 \le 16 \widehat{f}(0)\sum_{|m|\ge \frac{\gamma_1q(\log q)^{\gamma_2}}{L}}
\exp(-\delta \sqrt {|Lm/q|}), $$

which, via (\ref{equation:lemma3:decay}) and since $q\ge 3$, implies

$$S_1 \le 16
\widehat{f}(0)\frac{20}{Lq^{-1}}\exp(-\frac{\delta(\gamma_1(\log q)^{\gamma_2}
  )^{1/2}}{2})
\le \widehat{f}(0)/4,$$

given that we choose
$\gamma_1, \gamma_2$ sufficiently large.

{ \noindent \it Estimate for $S_2$.} We have

$$S_2 =\frac{\widehat{f}(0)}{q}\sum_{\scriptstyle |m|\le L' \atop \scriptstyle m\neq 0}|\sum_{z=1}^{q}e(\frac{\bar{a}p'z^2}{q'} +\frac{2\bar{a}pz_1zm}{q})|.$$

\noindent We shall choose $D>\gamma_2$.

\noindent Set

$$\gamma_1:= \big(\frac{6(D-\gamma_2)}{e}\big)^{D-\gamma_2}.$$

\noindent First, we observe that

$$L'q = \frac{2\gamma_1 q^2(\log q)^{\gamma_2}}{(sq)^{1/2}(\log q)^{D}} =
\frac{2\gamma_1 q^{3/2}}{s^{1/2}(\log q)^{D-\gamma_2}}
=\frac{2\gamma_1{q'}^{1/2}q}{(\log q)^{D-\gamma_2}}\ge
{q'}^{4/3}\frac{\gamma_1q^{1/6}}{(\log q)^{D-\gamma_2}}.$$

\noindent It is not hard to show that the function
$q^{1/6}/(\log q)^{D-\gamma_2}$, where $q\ge 3$, attains its
minimum at $q=\exp(6(D-\gamma_2))$. Therefore, by the choice of $\gamma_1$, we have

$$L'q \ge {q'}^{4/3}.$$

\noindent Next, Lemma \ref{lemma:Weyl} applied for $\ep=1/3$ (and with
the mentioned $c$ and $\alpha$) yields

\begin{align*} S_2 &=\frac{\widehat{f}(0)}{q}\sum_{\scriptstyle |m|\le L' \atop \scriptstyle m\neq 0}|\sum_{z=1}^{q}e(\frac{\bar{a}p'z^2}{q'} +\frac{2\bar{a}pz_1zm}{q})| \\
& \le c \frac{\widehat{f}(0)}{q}(\frac{L'q}{\sqrt{q'}}+
L'\sqrt{q})(\log q)^{\alpha} \\
&\le 2c \frac{\widehat{f}(0)}{q}\frac{L'q}{\sqrt{q'}}(\log q)^{\alpha}= 2c\frac{\widehat{f}(0)L'}{\sqrt{q'}}(\log q)^{\alpha}.\end{align*}

\noindent It follows that

$$S_2 \le\frac{4c\gamma_1 q(\log q)^{\alpha+\gamma_2}}{(\sqrt{sq}\log^D
q)\sqrt{q'}} \widehat{f}(0) = \frac{4 c\gamma_1 (\log q)^{\alpha+ \gamma_2} }
{(\log q)^D} \widehat{f}(0).$$

\noindent Now we choose $D,\gamma_2$ so that $D-\gamma_2-\alpha
=1$. Thus $\gamma_1=(6(\alpha+1)/e)^{\alpha+1}$, and

$$S_2\le \frac{4c \gamma_1(\log q)^{\alpha+\gamma_2}}{(\log q)^D}\widehat{f}(0) =
\frac{4c(6(\alpha+1)/e)^{\alpha+1}}{\log q} \widehat{f}(0)\le \widehat{f}(0)/4$$

\noindent where the last inequality comes from (\ref{assumeq}).

\end{proof}

\begin{remark}

\noindent We can also use Burgess estimate to have an alternative proof with a slightly better bound. However, an improvement in this section does not improve the main theorem.
\end{remark}

\section{Proof of Lemma \ref{lemma:main2}}\label{section:lemma2}

\noindent We first apply Lemma \ref{lemma:main1} to obtain a large proper GAP
$Q$ of rank 1 or 2.  By this lemma, we  have $A^{''} \subset \{s_1, \dots, s_m \} + Q $, where $m$ is a constant.

\noindent Let $S_i=A''\cap (s_i+Q)$ for $1\le i \le m$. We would like to guarantee that all $S_i$ are large by the following argument.

 If $S_i$ is smaller than $n^{1/3}(\log n)^{3C/10} $, then we delete it from $A^{''}$ and add to $A'$.  The new sets $A'$, $A^{''}$ and $Q$ still satisfy the claim of Lemma \ref{lemma:main1}. On the other hand,
 that the total number of elements added to $A'$ is only
 $O(n^{1/3}(\log n)^{3C/10} = o(|A'|)$, thus the sizes of $A'$ and $A''$
hardly changes.

From now on, we assume that
$|S_i|\ge n^{1/3}(\log n)^{3C/10}$ for all $i$.

For convenience, we let

$$s_i':=s_i+r.$$

Thus every element of $S_i$ is congruent with $s_i'$ modulo $q$.

\subsection{$Q$ has rank one} In this subsection, we deal with the (easy) case when $Q$ has rank one. We write
 $Q=\{r+qx \,\,| 0\le x\le L\}$ where $L \ge n^{2/3} (\log n) ^{C/2}$.

\noindent Since $Q\subset S_{A'} \subset [\frac{n}{p} |A'|]$, we have

$$q \le \frac{|A'|n}{pL}  \le \frac{n^{2/3}}{(\log n)^{C/6} p}.$$

\noindent By setting  $C$ (of Lemma \ref{lemma:main1}) sufficiently large
compared to $D$ (of Lemma \ref{lemma:main3}), we can guarantee that

\begin{equation}\label{equation:lemma2:dim1:1}
(pq)^{1/2}(\log q)^D\le n^{1/3}.
\end{equation}

\noindent Let  $d:=(s_1+r,\dots,s_m+r,q)=(s_1',\dots,s_m',q)$. If $d>1$ then all elements of $A''$ are divisible by
$d$, since $A^{''}$ are covered by $\{s_1, \dots, s_m\} +Q$. Thus we reach the third case of the lemma and are done.

 Assume now that $d=1$. By Lemma \ref{lemma:main3}, we can find
$0\le x_i \le (pq)^{1/2}(\log q)^D$ such that

\begin{equation}\label{equation:lemma2:dim1:2}
s_1'x_1+\dots+s_m'x_m+r \equiv pz^2 (\bmod{q}).
\end{equation}

\noindent Pick from $S_i$'s exactly $x_i$ elements and add them
together to obtain a number $s$. The set $s+ Q$ is a translate of $Q$ which
satisfies the first case of Lemma \ref{lemma:main2} and we are done.

\subsection{$Q$ has rank two} In this section, we consider the (harder) case when $Q$ has rank two. The main idea is similar to the rank one case, but the technical details are somewhat more tedious. We write

 $$Q=r+q(q_1x+q_2y)| 0\le x
\le L_1, 0\le y \le L_2$$

 where $L_1L_2 =|Q|  \ge n
 \log^{2C/3} n$.

\noindent  As $Q$ is proper, either $q_1\ge L_2$ or $q_2\ge L_1$ holds. Thus $qL_1L_2
\le |A'|n/p$, which yields (with room to spare)

\begin{equation}\label{equation:lemma2:dim2:1}
q\le \frac{n^{1/3}}{(\log n)^{C/6} p} .
\end{equation}

\noindent We consider two cases. In the first (simple) case, both $L_1$ and $L_2$ are large. In the second, one of them can be small.

{\bf Case 1.} $\min(L_1,L_2)\ge n^{1/3}(\log n)^{C/4}$.  Define $d:=(s_1',\dots,s_m',q)$ and argue as in the previous section. If $d>1$, then we end up with the third case of Lemma \ref{lemma:main2}. If $d=1$
then
apply Lemma \ref{lemma:main3}. The fact that $q$ is sufficiently small (see
(\ref{equation:lemma2:dim2:1}))  and that $|S_i|$ is sufficiently large
guarantee that we can choose $x_i$ elements from $S_i$. At the end, we will obtain a GAP of rank 2 which is a translate of $Q$ and satisfies the second case of Lemma \ref{lemma:main2}.

{\bf Case 2.} $\min(L_1,L_2)\le n^{1/3} (\log n)^{C/4}$. In this case the sides of GAP $Q$ are unbalanced and one of them is much larger than the other. We are going to exploit this fact to create a GAP of rank one (i.e., an arithmetic progression) which satisfies the first case of Lemma \ref{lemma:main1}, rather than trying to create a GAP of rank two as in the previous case.

\noindent Without loss of
generality, we assume that $L_1\le n^{1/3}(\log n)^{C/4}$. By the lower bound on
$L_1L_2$, we have that $L_2\ge
n^{2/3}(\log n)^{C/4}$. This implies

  $$qq_2 \le \frac{|A'|n}{pL_2 } \le
\frac{n^{2/3}}{(\log n)^{C/12} p}. $$ Again by setting $C$ sufficiently large compared to $D$, we have

\begin{equation}\label{equation:lemma2:dim2:2}
(pqq_2)^{1/2}(\log qq_2)^D\le n^{1/3}(\log n)^{C/5}.
\end{equation}

{\it Creating a long arithmetic progression.} In the rest of the proof we
make use of $A''$ and $Q$ to create an AP of type $\{r'+qq_2x_2\,\,|0\le x_2 \le L_2,
r'\equiv pz^2(\bmod{qq_2})\}$. This gives the first case in Lemma \ref{lemma:main1} and  thus completes the proof of this lemma.

\noindent Let $S$ be an element of $\{S_1,\dots,S_m\}$. Since $S$ is
contained in a translate of $Q$, there is a number $s$ such that
any $a\in S$ satisfies $a
\equiv s+tqq_1(\bmod{qq_2})$ for some $0\le t \le L_1$ (for instance,
if $a\in S_i$ then $a \equiv s_i'+tqq_1(\bmod{qq_2})$). Let $T$ denote the {\it multiset} of $t$'s obtained this way. Notice that
$T$ could contain one element of multiplicity $|S|$. Also recall that
$|S| \ge n^{1/3} (\log n)^{3C/10}$.

\noindent For $0\le l \le |S|/2$, let $m_{l}$ and $M_{l}$ (respectively) be the
minimal and  maximal values of the sum of  $l$ elements
of $T$. Since $0\le t\le L_1$ for every $t\in T$, by swapping summands of
$m_l$ with those of $M_l$, we can obtain a sequence
 $m_l = n_0 \le \dots \le n_{l} =M_l$ where each $n_i \in l^\ast T$ and $ n_{i+1}-n_i \le L_1$  for all relevant $i$.

\noindent By construction, we have

\begin{equation}\label{equation:lemma2:dim2:2'}
[m_l,M_l] \subset \{n_0, \dots, n_l \} + [0,L_1] \subset l^\ast T + [0,L_1].
\end{equation}

\noindent Next we observe that if  $l$ is large and $M_l-m_l$ is
small, then $T$ looks like a sequence of only one element with high
multiplicity.  We will call this element the  {\it essential} element
of $T$.

\begin{proposition}\label{fact:lemma2:1}
Assume that $\frac{1}{4} (n^{1/3}(\log n)^{3C/10}\le l \le
\frac{1}{2} n^{1/3}(\log n)^{3C/10}$ and $M_l-m_l < \frac{1}{4} n^{1/3} (\log n)^{3C/10}$.
Then all but at most $\frac{1}{2} n^{1/3}(\log n)^{3C/10}$ elements of $T$ are
the same.
\end{proposition}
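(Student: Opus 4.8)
The plan is to argue entirely with the multiset $T$, discarding the irrelevant bound $t\le L_1$, and to use the fact that when the extremal partial sums $m_l$ and $M_l$ are close, $T$ can have almost no spread. Write $N:=n^{1/3}(\log n)^{3C/10}$, so that $|T|=|S|\ge N$, $\tfrac14 N\le l\le\tfrac12 N$, and $M_l-m_l<\tfrac14 N\le l$. List the (integer) elements of $T$ in non-decreasing order $t_1\le t_2\le\dots\le t_{|T|}$. Then $m_l=\sum_{i=1}^{l}t_i$ is the sum of the $l$ smallest and $M_l=\sum_{i=|T|-l+1}^{|T|}t_i$ is the sum of the $l$ largest; since $l\le\tfrac12 N\le\tfrac12|T|$ the two index blocks do not overlap, in fact $|T|-l+1\ge l+1$.

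The first step is to pin down the ``middle'' value. Put $a:=t_l$ and $b:=t_{|T|-l+1}$; by monotonicity $b\ge t_{l+1}\ge a$. From $m_l\le l\,a$ and $M_l\ge l\,b$ we get $l(b-a)\le M_l-m_l<l$, hence $b-a<1$; as $b-a$ is a nonnegative integer, $b=a$. Therefore $t_l=t_{l+1}=\dots=t_{|T|-l+1}=a$, so the only elements of $T$ that can differ from $a$ are $t_1,\dots,t_{l-1}$ (those below $a$) and $t_{|T|-l+2},\dots,t_{|T|}$ (those above $a$).

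The second step counts these deviations. Let $s:=\#\{i: t_i<a\}$ and $r:=\#\{i:t_i>a\}$; by the previous step $r,s\le l-1$. Since $s<l$, the $s$ elements below $a$ all lie among the $l$ smallest and each is $\le a-1$, so $m_l\le s(a-1)+(l-s)a=la-s$; since $r<l$, the $r$ elements above $a$ all lie among the $l$ largest and each is $\ge a+1$, so $M_l\ge r(a+1)+(l-r)a=la+r$. Subtracting, $r+s\le M_l-m_l<\tfrac14 N$. The elements of $T$ not equal to $a$ are exactly the $r$ elements above $a$ together with the $s$ below it, so fewer than $\tfrac14 N<\tfrac12 n^{1/3}(\log n)^{3C/10}$ of them, which is the assertion.

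The one point that needs care is the two-stage structure: the crude bound ``at most $2(l-1)\le N$ elements differ from $t_l$'' is too weak, so one must use the inequality $M_l-m_l<l$ twice — first to collapse the central block by forcing $t_l=t_{|T|-l+1}$, and then again, now charging each deviation a unit of $M_l-m_l$, to obtain $r+s\le M_l-m_l$. Beyond that, everything is routine bookkeeping with sorted sequences, and in particular the constants are comfortable (we actually get $\tfrac14 N$, not just $\tfrac12 N$).
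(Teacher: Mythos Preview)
Your proof is correct. The paper's argument is a one-line variant of the same idea: writing $t_1\le\dots\le t_l$ for the $l$ smallest elements and $t_1'\le\dots\le t_l'$ for the $l$ largest, it uses the identity $M_l-m_l=\sum_{i=1}^l(t_i'-t_i)$ (a sum of $l$ nonnegative integers) together with $M_l-m_l<l$ to force $t_i'=t_i$ for some $i$; since $t_i\le t_l\le t_1'\le t_i'$, this collapses the whole middle block to a single value, and only the $l-1$ elements $t_1,\dots,t_{i-1},t_{i+1}',\dots,t_l'$ can differ, giving the bound $l-1<\tfrac12 N$.

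Your two-step version replaces the telescoping identity by the cruder $M_l-m_l\ge l(b-a)$ to first pin down $b=a$, and then re-uses $M_l-m_l$ with the finer inequality $M_l-m_l\ge r+s$ to count deviations. This is a little longer but has the advantage of yielding the sharper bound $r+s<\tfrac14 N$ rather than $l-1$; the paper's telescoping argument is slicker but only gives $\tfrac12 N$. Both routes rest on the same two ingredients --- sorting $T$ and integrality of the $t_i$ --- so the difference is one of packaging rather than substance.
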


\begin{proof}(Proof of Proposition \ref{fact:lemma2:1})
Let $t_1\le t_2\le \dots \le t_l$ be the $l$ smallest elements of $T$
 and $t_1' \le \dots \le t_l'$ be the $l$ largest. By the upper bound on $l$ and lower bound on $|S|=|T|$, $t'_1 \ge t_l$.  On the other hand, $M_l-m_l=(t_1'-t_1)+\dots+(t_l'-t_l)$. Thus if $M_l-m_l <
\frac{1}{4} n^{1/3}(\log n)^{3C/10} \le l-1$ then $t_i'=t_i$ for some $i$. The
claim follows.
\end{proof}

\noindent The above arguments work for any $S$ among $S_1, \dots, S_m$. We now associate to each $S_i$ a multiset $T_i$, for all $1\le i \le m$.

{\bf Subcase 2.1} {\it The hypothesis in Proposition
\ref{fact:lemma2:1} holds for all $T_i$.} In this case we move to $A'$
those elements of $S_i$ whose corresponding parts in $T_i$ is not the
essential element. The number of elements moved is only
$O(n^{1/3}(\log n)^{3C/10})$, which is negligible compared to both $|A'|$ and $|A''|$. Furthermore, the properties claimed in Lemma \ref{lemma:main1} remain unchanged and the size of new $S_i$ are now at least $\frac{1}{2} n^{1/3} (\log n)^{3C/10}. $

\noindent Now consider the elements of  $A''$ with respect to
 modulo $qq_2$. Since each $T_i$ has only the essential element, the elements of $A^{''}$ produces at most  $m$
residues $u_i=s_i'+t_iqq_1$, each of multiplicity at least

$$|S_i|\ge
\frac{1}{2} n^{1/3}(\log n)^{3C/10} \ge (pqq_2)^{1/2}(\log qq_2)^D $$
where the last
inequality comes from (\ref{equation:lemma2:dim2:2}). Define
$d=(u_1,\dots,u_m,qq_2)$ and proceed as usual,
applying Lemma \ref{lemma:main3}.

{\bf Subcase 2.2} {\it The hypothesis in Proposition
\ref{fact:lemma2:1} does not hold for all $T_i$.} We can assume that, with respect to $T_1$, $M_{l}-m_{l} \ge
\frac{1}{4} n^{1/3}(\log n)^{3C/10} $ for all  $\frac{1}{4} n^{1/3} (\log n)^{3C/10}  \le l
\le \frac{1}{2} n^{1/3}(\log n)^{3C/10}$. From now on, fix an $l$ in this interval.

\noindent Next, for a technical reason, we  extract from $S_1$ a very small
part $S_1'$ of cardinality $n^{1/3}(\log n)^{C/5}$ and set $S_1^{''}=S_1\backslash S_1'$.  Let $T$ be the multiset associated with $S_1^{''}$. We can assume that $T$ satisfies the hypothesis of this subcase.

\noindent Define $d:=(s_1',\dots,s_m',q)$. As usual, the case $d>1$ leads to the third case of Lemma \ref{lemma:main2}, so we can assume $d=1$. By Lemma \ref{lemma:main3}, there exist integers

$$0\le
x_i\le (pq)^{1/2}(\log n)^D \le n^{1/3}(\log n)^{C/5} \le |S_i|$$ and $k,z_1$ such
that

\begin{equation}\label{equation:lemma2:dim2:3}
s_1'x_1+\dots+s_m'x_m+(ls_1'+r)=pz_1^2+kq.
\end{equation}

\noindent For $i\ge 2$ we pick from $S_i$ exactly $x_i$ elements
$a^i_1,\dots,a^i_{x_i}$, and for $i=1$ we pick $x_1$ elements
$a^1_1,\dots,a^1_{x_1}$ from $S_1'$ and add them
together. By (\ref{equation:lemma2:dim2:3}) the following holds for
some integer $k'$,

\begin{equation}\label{equation:lemma2:dim2:3'}
\sum_{i=1}^m \sum_{j=1}^{x_i}a^i_j + (ls_1'+r)=pz_1^2+k'q.
\end{equation}

\noindent Furthermore, by Proposition \ref{proposition:square:local:1},
as $q=(qq_1,qq_2)$, there exist $0\le x\le (pqq_2)^{1/2}\log^D(qq_2)$
and $k'',z_2$ such that

$$qq_1x+pz_1^2+(k'+m_lq_1)q=pz_2^2+ k''qq_2,$$

\begin{equation}\label{equation:lemma2:dim2:4}
pz_1^2+k'q+(x+m_l)qq_1=pz_2^2+k''qq_2.
\end{equation}

\noindent As $(pqq_2)^{1/2}\log^D(qq_2)\le n^{1/3} \log^{C/5} n$  and $n^{1/3}\log^{C/5} n\le M_l-m_l$, we
have

$$m_l \le x+m_l \le M_l.$$

\noindent On the other hand, recall that $[m_l,M_l]\subset l^{\ast}
T+[0,L_1]$ (see  (\ref{equation:lemma2:dim2:2'})), we have

$$\{ls_1'+r+[m_{l},M_{l}]qq_1\} \subset l^\ast S_1^{''}+r+[0,L_1]qq_1 (\bmod
	  {qq_2}).$$

\noindent Thus

\begin{equation}\label{equation:lemma2:dim2:4'}
ls_1'+r+(x+m_l)qq_1 \in l^\ast S_1^{''}+ r +
[0,L_1]qq_1(\bmod{qq_2}).
\end{equation}

\noindent Combining
(\ref{equation:lemma2:dim2:3'}),(\ref{equation:lemma2:dim2:4}) and
(\ref{equation:lemma2:dim2:4'}) we infer that there exist $l$ elements $a_1,\dots,a_l$ of $S_1^{''}$,
and there exist $0\le u \le L_1$ and $v$ such that

$$\sum_{i=1}^m\sum_{j=1}^{x_i}a^i_j+a_1+\dots+a_l + r+uqq_1 = pz_2^2+ vqq_2.$$

\noindent Hence, $ \sum_{i=1}^m\sum_{j=1}^{x_i}a^i_j+a_1+\dots+a_l + Q $
contains the AP $\{(pz_2^2+ vqq_2)+ qq_2x_2|0\le x_2 \le L_2\}$, completing Subcase 2.2.

\noindent Finally, one checks easily that the number of elements of $A''$
involved in the creation of $pz_2^2$ in all cases is bounded by
$O(n^{1/3}\log^{C/5}n)=o(|A'|)$, thus we may put all of them to
$A'$ without loss of generality.

\section{Proof of Theorem \ref{theorem:square:p}: The rank one case.}\label{section:dim1}

\noindent Here we consider the (easy) case when $Q$ (in Lemma \ref{lemma:main2}) has rank one. In this case, $S_{A'}$ contains an AP $Q=\{r+qx|0\le x\le
L\}$, where $L\ge n^{2/3}(\log n)^{C/4}$ as in the first statement of Lemma \ref{lemma:main2}. We are going to show that $Q$ contains a number of the form $pz^2$.

\noindent Write $r = pz_0^2 + tq$ for some $0\le z_0 \le q$. Since $r$ is a
sum of some elements of $A'$, we have

$$0\le r \le |A'| (n/p) \le \frac{n^{4/3}(\log n)^{C/3}}{p}.$$

\noindent Thus

\begin{equation}\label{equation:square:case1:t}
-pq \le t \le \frac{n^{4/3}(\log n)^{C/3}}{pq}.
\end{equation}

\noindent The interval $[t/pq,(t+L)/pq]$ contains at least two squares because

$$(\frac{L}{pq})^2 \ge \frac{n^{4/3}(\log n)^{C/2}}{(pq)^2} \ge
10\frac{t}{pq}+20.$$

\noindent Thus, we can find an integer $x_0\ge 0$ such that
$\frac{t}{pq} < x_{0}^{2 } < (x_{0} +1)^{2 } \le  \frac{t+L}{pq}.$
 It is implied that (since $0\le z_0\le q$)

\begin{equation} \label{eqn:sandwich} t\le pqx_0^2 + 2pz_0 x_0 \le
t+L.\end{equation}

\noindent Set $z:=z_0+qx_0$. We have

$$pz^2 = pz_0^2 + q(pqx_0^2 + 2pz_0 x_0).$$

On the other hand, by (\ref{eqn:sandwich}), the right hand side belongs to

$$ pz_0^2 + q[t,t+L] = pz_0^2 + tq + q[0,L]= r+q[0,L]=Q.$$

Thus, $Q$ contains $pz^2$, completing the proof for this case.

\section{Proof of Theorem \ref{theorem:square:p}: The rank two case }\label{section:dim2}

In this case, we assume that $S_{A'}$ contains a proper GAP as in the second statement of
Lemma \ref{lemma:main2}. We can write

$$Q=\{r+q(q_1x_1+q_2x_2)\,\,|0\le x_1\le L_1,0\le x_2\le
L_2,(q_1,q_2)=1\}$$

where

\begin{itemize}
\item $\min(L_1,L_2)\ge n^{1/3}(\log n)^{C/4},$

\vskip .1in

\item $L_1L_2 \ge n (\log n)^{C/2},$

\vskip .1in

\item $q\le \frac{n^{1/3}(\log n)^{-C/6}}{p},$

\vskip .1in

\item and $r=pz_0^2+ tq$ for some integers $t$ and $0\le z_0\le q$.
\end{itemize}

\noindent Since $r$ is a sum of some elements of $A'$, we have $0\le r \le
\frac{n^{4/3}(\log n)^{C/3}}{p}$, and so

$$-pq \le t \le \frac{n^{4/3}(\log n)^{C/3}}{pq}.$$

\noindent Without loss of generality, we assume that $q_2L_2\ge q_1L_1.$
Because $Q$ is proper, either $q_2\ge L_1$ or $q_1\ge L_2$. On the
other hand, if $q_2 < L_1$ then $L_2 \le q_1$, which is
impossible by the assumption. Hence,

$$q_2\ge L_1.$$

\noindent Now we write $Q=\{pz_0^2+q(q_1x_1+q_2x_2+t)|0\le x_1\le L_1,0\le
x_2\le L_2,(q_1,q_2)=1\}$ and notice that if we set $w :=z_0+zq$ then

$$pw^2 - pz_0^2 = p(z_0+qz)^2-pz_0^2 = q(pqz^2 + 2pz_0 z).$$

\noindent Thus if there is an integer $z$ satisfies

\begin{equation}\label{equation:square:case2:root}
pqz^2+2pz_0z \in \{q_1x+q_2y+t | 0\le x \le L_1,0\le y \le L_2\}
\end{equation}

\noindent then $pw^2 \in Q$, and we are done with this case.
The rest of the proof is the verification of the following proposition, which shows the existence of a desired $z$.

\begin{proposition}\label{proposition:square:case2}
There exists an integer $z$ which satisfies
(\ref{equation:square:case2:root}).
\end{proposition}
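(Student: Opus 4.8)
After the substitution $w=z_0+qz$ made just above, Proposition \ref{proposition:square:case2} asserts that the quadratic
\[
M(z):=pqz^{2}+2pz_0z-t
\]
takes a value lying in the proper GAP $B:=\{q_1x+q_2y\mid 0\le x\le L_1,\ 0\le y\le L_2\}$ for some non‑negative integer $z$. Since $r=pz_0^{2}+tq$ is a partial sum of $A'$ we have $0\le r\le|A'|n/p$, so $pq|t|\le|A'|n$ and $pq\cdot\diam(B)=pq(q_1L_1+q_2L_2)\le|A'|n=n^{4/3}(\log n)^{C/3}$. The plan is to count, by the circle method, the number $\mathcal N$ of triples $(z,x,y)$ with $0\le x\le L_1$, $0\le y\le L_2$, $z$ in the window $W:=\{z\in\Z_{\ge 0}\mid 0\le M(z)\le q_1L_1+q_2L_2\}$, and $M(z)=q_1x+q_2y$, and to prove $\mathcal N>0$. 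The point of this $W$ is that $M$ is increasing on $W$, that $M(W)$ essentially fills $[0,\diam(B)]$, and that $|W|\asymp\diam(B)\bigl(pq(q_1L_1+q_2L_2+t)\bigr)^{-1/2}$, which by the displayed bounds and $L_1L_2\ge n(\log n)^{C/2}$ is comfortably $\gg n^{1/3}(\log n)^{C/3}$; in particular no degenerate short‑window case occurs.

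\textbf{Main term.} Detecting the equation $M(z)=q_1x+q_2y$ by $\int_0^1 e(\theta\,\cdot\,)\,d\theta$ — or, to smooth the boundary, by first inserting the weights of Lemma \ref{lemma:Fourier} in the $y$‑variable and applying Poisson summation exactly as in the proof of Proposition \ref{proposition:square:local:1} — one gets
\[
\mathcal N=\int_0^1\Bigl(\sum_{z\in W}e(\theta M(z))\Bigr)\Bigl(\sum_{0\le x\le L_1}e(-\theta q_1x)\Bigr)\Bigl(\sum_{0\le y\le L_2}e(-\theta q_2y)\Bigr)\,d\theta .
\]
The contribution of a small neighbourhood of $\theta=0$ is $\asymp|W|\,L_1L_2/\diam(B)\asymp L_1L_2\bigl(pq(q_1L_1+q_2L_2+t)\bigr)^{-1/2}$; using $L_1L_2\ge n(\log n)^{C/2}$ together with $pq(q_1L_1+q_2L_2)\le n^{4/3}(\log n)^{C/3}$ and $pq|t|\le n^{4/3}(\log n)^{C/3}$ this is $\gg n^{1/3}(\log n)^{C/3}$, leaving a large power of $\log n$ to spare against the error.

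\textbf{Error term.} The complementary range of $\theta$ is split into major and minor arcs by Dirichlet's theorem. On the minor arcs one bounds $|\sum_{z\in W}e(\theta M(z))|$ by a Weyl estimate for the quadratic $pqz^{2}+2pz_0z$ — precisely the kind of sum controlled by Lemma \ref{lemma:Weyl} (equivalently by the classical Weyl inequality), after the usual reduction to a denominator coprime to the leading coefficient, i.e.\ after dividing out $\gcd(pq,\cdot)$ just as $q$ was replaced by $q'$ in the proof of Lemma \ref{lemma:main3}; the two linear sums are bounded by $\min(L_1,\|\theta q_1\|^{-1})$ and $\min(L_2,\|\theta q_2\|^{-1})$, and summing these over the approximating rationals costs only a divisor factor $(\log n)^{O(1)}$, treated as in Lemma \ref{lemma:divisor}. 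On the major arcs the $z$‑sum unfolds into a complete quadratic Gauss sum times a main‑term‑like integral, so the major‑arc total is $\asymp\mathcal N_{\mathrm{main}}\cdot\mathfrak S$ with $\mathfrak S$ a product of local densities; $\mathfrak S$ is bounded below because $\min(L_1,L_2)\ge n^{1/3}(\log n)^{C/4}$ forces $B$ to meet every residue class to every prime‑power modulus below that threshold, because at primes dividing $q$ local solvability is exactly the congruence $r\equiv pz_0^{2}\ (\bmod q)$ supplied by Lemma \ref{lemma:main2}, and because for the finitely many larger primes dividing one of $q_1,q_2$ the image of $M$ to that modulus is too large to avoid the long progression $B$ there.

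\textbf{Main obstacle.} The hard part is the uniform error bound: making the minor‑arc and off‑centre major‑arc contributions genuinely $o$ of the main term $\gg n^{1/3}(\log n)^{C/3}$, uniformly in $p,q,q_1,q_2,t,z_0$. This is what forces $C$ to be taken large compared with the absolute constants produced by Lemma \ref{lemma:Weyl} and Lemma \ref{lemma:divisor}, and the $\gcd$‑bookkeeping (common factors of $pq$ with $q_1$, $q_2$, or with the circle‑method denominator) together with the lower bound for $\mathfrak S$ are the delicate points; the remainder is routine. Once $\mathcal N>0$, any triple it counts yields an integer $z$ with $pqz^{2}+2pz_0z\in\{q_1x+q_2y+t\mid 0\le x\le L_1,\ 0\le y\le L_2\}$, i.e.\ one satisfying \eqref{equation:square:case2:root}, which completes the rank‑two case and hence the proof of Theorem \ref{theorem:square:p}.
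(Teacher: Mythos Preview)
Your plan is a full three-variable circle method, which is a genuinely different and heavier route than the paper's. The paper exploits $(q_1,q_2)=1$ to collapse everything to the single modulus $q_2$: it fixes sub-intervals $I_x\subset[0,L_1]$ and $I_z$ so that for every $(x,z)\in I_x\times I_z$ the value $az^2+bz-q_1x-t$ (with $a=pq$, $b=2pz_0$) automatically lies in $[0,q_2L_2]$; the only remaining constraint is then $q_2\mid az^2+bz-q_1x-t$, i.e.\ $x\equiv\bar{q_1}(az^2+bz-t)\pmod{q_2}$. Poisson summation in $x$ with a smooth weight on $I_x$, followed by summation over $z\in I_z$, gives a main term $\widehat f(0)\,|I_z|/q_2$, and the $m\neq 0$ tail is bounded by a single application of Lemma \ref{lemma:Weyl} with effective denominator $q_2':=q_2/(pq,q_2)$. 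There is no major/minor-arc dichotomy and no singular series; the reduction to one modulus is what buys the short argument.

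Your outline could likely be pushed through, but two points need repair. The singular-series paragraph is confused: the congruence $r\equiv pz_0^2\pmod q$ was already consumed in setting up \eqref{equation:square:case2:root}, and at this stage local solvability at \emph{every} prime is automatic simply because $(q_1,q_2)=1$ makes $q_1x+q_2y$ surjective modulo anything --- so $\mathfrak S$ is easy, but not for the reason you state. More seriously, the minor-arc estimate is incomplete: when the Dirichlet denominator $q_*$ shares a large factor with $pq$, the Weyl bound on the $z$-sum degenerates (the effective modulus $q_*/(pq,q_*)$ can collapse to $1$), and the saving must instead be extracted from the $x$- or $y$-sum; you allude to this ``gcd-bookkeeping'' but do not carry it out, and it is exactly the case analysis the paper's single-modulus reduction is designed to avoid.
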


\begin{proof} (Proof of Proposition \ref{proposition:square:case2}) The
method is similar to that of Lemma \ref{lemma:main3}, relying on Poisson summation.

\noindent Set $a:=pq$ and $b:=2pz_0.$ Notice that since $0\le z_0 \le q$,  $0\le b\le 2pq=2a$. Our task is to  find a $z$ such that

$$az^2+bz-q_1x-t=q_2y \mbox{ for some } 0\le x\le L_1,0\le y\le L_2.$$

Define (with foresight; see (\ref{eqn:I})) $I_x:=[L_1/8, L_1/4]$ and

$$I_z:=[(\frac{q_1L_1/4+t}{a})^{1/2}+1, (\frac{q_2L_2+q_1L_1/8+t}{a})^{1/2}-1].$$ (Notice the that
the lower bounds on $L_1,L_2$ and the upper bound on $pq$ guarantee that the expressions under the square roots are positive.)

\noindent Since $r +qq_1L_1 +qq_2 L_2 =  pz_0^2 +tq + q(q_1L_1+q_2 L_2) \in Q$, it follows that (with $\max (Q)$ denoting the value of the largest element of $Q$)

$$q_2L_2+q_1L_1/8 +t \le \max(Q)/q \le  \frac{ p^{-1} n^{4/3}(\log n)^{C/3} }{q} = \frac{n^{4/3}(\log n)^{C/3}}{a}.$$

\noindent Thus

$$|I_z|\ge \frac{1}{4}\frac{(q_2L_2-q_1L_1/4)a^{-1}}{\sqrt{\frac{q_2L_2+q_1L_1/8+t}{a}}}$$

\begin{equation}\label{equation:square:case2:Iz}
|I_z| =\Omega ( \frac{q_2L_2}{n^{2/3}(\log n)^{C/6}}).
\end{equation}

%In order to verify (\ref{equation:square:case2:Iz}), notice that
%$|I_z| =\Omega (\frac{q_2L_2}{a})^{1/2})$, as one can show, using the fact that $t \ge -pq$ and the upper bound on $pq$ and lower bound on $L_2$ to have that
%$|t| \le pq  = o(L_2)$. Thus,  (\ref{equation:square:case2:Iz}) will follows from the fact that
%$n^{2/3} (\log n)^{C/6} =\Omega (\sqrt {q_2L_2})$, which is obvious since
%$qq_2L_2 = O(\max (Q)) = O(p^{-1}n^{4/3} (\log n)^{C/3} )$.

\noindent By the definitions of $I_x$ and $I_z$, we have,
 for any $x\in I_x$ and $z\in I_z$

\begin{equation} \label{eqn:I} 0\le az^2+bz-q_1x-t \le a(z+1)^2-q_1x-t \le q_2L_2.\end{equation}

\noindent Thus, for any such pair of $x$ and $z$, if $az^2+bz-q_1x-t$ is divisible by
$q_2$, then $y :=(az^2+bz-q_1x-t)/q_2$ is an integer in $[1,L_2]$.
We are now using the ideas from Section
\ref{section:lemma3}, with respect to modulo $q_2$ and the intervals $I_x$, $I_z$.

\noindent Let $\bar{q_1}$ be the reciprocal of $q_1$ modulo $q_2$
(recall that $(q_1,q_2)=1$). Let $f$ be a
function given by Lemma \ref{lemma:Fourier} with respect to the interval $I_x$. For a given $z\in I_z$, the number of $x\in I_x$ satisfying
(\ref{equation:square:case2:root}) is at least $N_z$, where

$$N_z :=\sum_{m\in \Z}f(\bar{q_1}az^2+\bar{q_1}bz-\bar{q_1}t+mq_2).$$

\noindent By applying Poisson summation formula (\ref{PSF}) and summing over $z$ in $I_z$
we obtain

$$N:=\sum_{z\in I_z}N_z=\sum_{m\in \Z}\frac{1}{q_2}\widehat{f}(\frac{m}{q_2})\sum_{z\in I_z} e(\frac{(\bar{q_1}az^2+\bar{q_1}bz-\bar{q_1}t)m}{q_2}).$$

\noindent It suffices to show that $N >0$. Similar to the proof of Lemma \ref{lemma:main3}, we will again show that the right hand side is dominated by the contribution at $m=0$. By triangle inequality, we have

$$|N-\frac{1}{q_2}\widehat{f}(0)|I_z|| \le \sum_{\scriptstyle m\in \Z \atop \scriptstyle m\neq 0}\frac{1}{q_2}|\widehat{f}(\frac{m}{q_2})||\sum_{z\in I_z} e(\frac{(\bar{q_1}az^2+\bar{q_1}bz-\bar{q_1}t)m}{q_2})|.$$

\noindent Let $\gamma$ be a sufficiently large constant  and
let

$$L' := \frac{8q_2 (\log q_2)^{\gamma}}{L_1}.$$

We have

$$|N-\frac{1}{q_2}\widehat{f}(0)|I_z|| \le S_1+S_2$$

where

$$S_1:= \sum_{|m|\ge
L'}\frac{1}{q_2}|\widehat{f}(\frac{m}{q_2})||\sum_{z\in I_z}
e(\frac{(\bar{q_1}az^2+\bar{q_1}bz-\bar{q_1}t)m}{q_2})|$$

and

$$S_2:=\sum_{\scriptstyle |m|\le L' \atop \scriptstyle m\neq 0}\frac{1}{q_2}|\widehat{f}(\frac{m}{q_2})||\sum_{z\in I_z}
e(\frac{(\bar{q_1}az^2+\bar{q_1}bz-\bar{q_1}t)m}{q_2})|.$$

To conclude the proof, we will show that both $S_1$ and $S_2$ are $o(\frac{\widehat{f}(0)|I_z|}{q_2})$.

{\it \noindent Estimate for $S_1$.} By the property of $f$,

$$S_1 \le \frac{\widehat{f}(0)|I_z|}{q_2}\sum_{|m|\ge \frac{8q_2(\log q_2)^{\gamma}}{L_1}}
\exp(-\delta \sqrt {|m L_1/(8q_2)|}).$$

\noindent By (\ref{equation:lemma3:decay}), and as $q_2$ is large
($q_2\ge L_1 > n^{1/3}$), the inner sum is $o(1)$, so

\begin{equation}\label{equation:square:case2:S1}
S_1 =o(\frac{\widehat{f}(0)|I_z|}{q_2})
\end{equation} as desired.

{\it \noindent Estimate for $S_2$.} Let $q'= (\bar{q_1}a, q_2)$. We can write

\begin{equation}\label{equation:square:case2:q'}
\bar{q_1}a = q'q_1' ,q_2=q'q_2' \mbox{ with } (q_1',q_2')=1.
\end{equation}

\noindent Then

$$S_2 \le \frac{\widehat{f}(0)}{q_2}\sum_{\scriptstyle |m|\le L' \atop \scriptstyle m\neq 0}|\sum_{z\in I_z}
e(\frac{q_1'mz^2}{q_2'}+\frac{(\bar{q_1}bz-\bar{q_1}t)m}{q_2})|.$$

\noindent By Lemma \ref{lemma:Weyl} there are absolute constants  $c,\alpha$
such that

$$S_2 \le c\frac{\widehat{f}(0)}{q_2}
\Big( L'\sqrt{|I_z|}(\log n)^{\alpha} + \frac{ L'|I_z|(\log n)^{\alpha}}{\sqrt{q_2'}} \Big).$$

To show that $S_2 = o(\frac{\widehat{f}(0)|I_z|}{q_2})$, it suffices to show that

\begin{equation} \label{bound1}
L'(\log n)^{\alpha} = o(\sqrt {|I_z|})
\end{equation}

\noindent and

\begin{equation} \label{bound2}
L'(\log n)^{\alpha} = o(q_2')
\end{equation}

To verify (\ref{bound1}), notice that  by (\ref{equation:square:case2:Iz}), we have

$$|I_z|L_1^2 =\Omega( \frac{L_1^2q_2L_2}{n^{2/3}(\log n)^{C/6}}).$$

\noindent Thus

$$\frac{|I_z|} {{L'}^2 (\log n)^{2\alpha}} =\Omega
(\frac{|I_z|L_1^2}{q_2^2 (\log n)^{2\alpha+2 \gamma}}
)
= \Omega \Big(\frac{L_1^2L_2^2}{L_2 q_2 n^{2/3} (\log n)^{C/6 +2\alpha  + 2 \gamma}} \Big). $$

\noindent Since  $(L_1L_2)^2 \ge (n (\log n)^{C/2})^2=n^{2}\log^C n$ and
$L_2q_2 = O(\max(Q)) =O ( p^{-1} n^{4/3}(\log n)^{C/3})$, the last formula is $\omega (1)$ if we set $C$ sufficiently large compared to $\alpha$ and $\gamma$. This proves (\ref{bound1}).

\noindent As a result,

$$\frac{\widehat{f}(0)}{q_2} L'\sqrt{|I_z|}(\log n)^{\alpha}=o(\widehat{f}(0)|I_z|/q_2).$$

  Now we turn to (\ref{bound2}). Recall that $q_2=q'q_2'$ and $q'=(\bar{q_1}a,q_2)=(a,q_2)$ (as $q_1$ and $q_2$ are co-primes). Thus

$$q_2'\ge \frac{q_2}{a}=\frac{q_2}{pq}.$$

To show (\ref{bound2}), it suffices to show that

$$
\frac{q_2}{pq} = \omega ({L'}^2 (\log n)^{2\alpha} ) $$

\noindent which (taking into account the definition of $L'$) is equivalent to

$$ q_2 L_1^2 =\omega (pq q_2^2 (\log n)^{2\alpha+2 \gamma} ). $$

Multiplying both sides with $L_2 q_2^{-1}$, it reduces to

$$  L_1^2 L_2 =\omega (pq q_2 L_2 (\log n)^{2\alpha+2 \gamma} ). $$

Now we use the fact that $qq_2L_2 =O(\max (Q)) = O(p^{-1} n^{4/3} (\log n)^{C/3})$ and the lower bounds $L_1L_2 \ge n  (\log n)^{C/2}$ and $L_1 \ge n^{1/3} (\log n)^{C/4} $. The claim follows by setting $C$ sufficiently large compared to $\alpha$ and $\gamma$, as usual. Our proof is completed. \end{proof}

{\bf Acknowledgements.} The authors would like to thank
Henryk Iwaniec for helpful discussions.

\end{document}